\theoremstyle{theorem}
\newtheorem{Def}{Definition}[section]
\newtheorem{proposition}[Def]{Proposition}
\newtheorem{lemma}[Def]{Lemma}
\newtheorem{theorem}[Def]{Theorem}
\newtheorem{corollary}[Def]{Corollary}
\newtheorem{remark}[Def]{Remark}
\newcommand{\mf}{\mathcal{F}}
\newcommand{\br}{\mathbb{R}}
\newcommand{\bN}{\mathbb{N}}
\newcommand{\pr}{\mathbb{P}}
\newcommand{\bQ}{\mathbb{Q}}
\newcommand{\bE}{\mathbb{E}}
\newcommand{\sm}{\sigma}
\newcommand{\eps}{\epsilon}
\newcommand{\1}{{\bf 1}}
\begin{document}

\title{Approximation for non-smooth functionals of stochastic differential equations  with irregular drift 
}


\author
{
Hoang-Long Ngo\footnote{Hanoi National University of Education, 136 Xuan Thuy - Cau Giay - Hanoi - Vietnam, email: $\qquad$ngolong@hnue.edu.vn}
$\quad $ and $\quad$ 
Dai Taguchi\footnote{Osaka University, 1-3, Machikaneyama-cho, Toyonaka, Osaka 560-8531, Japan, email: dai.taguchi.dai@gmail.com}
}

\date{}
\maketitle

\begin{abstract}
This paper aims at developing a systematic study for the weak rate of convergence of the Euler-Maruyama scheme for stochastic differential equations with very irregular drift and constant diffusion coefficients.
We apply our method to obtain the rates of approximation for the expectation of various non-smooth functionals of both stochastic differential equations  and killed diffusion.
We also apply our method to the study of the weak approximation of reflected stochastic differential equations whose drift is H\"older continuous. 
\\
\textbf{2010 Mathematics Subject Classification}: 60H35, 65C05, 65C30\\\\
\textbf{Keywords}: Euler-Maruyama approximation, Irregular drift, Monte Carlo method, Reflected stochastic differential equation, Weak approximation

\end{abstract}


\section{Introduction} \label{sec:intro}

Let $T>0$ be fixed and $(X_t)_{0 \leq t \leq T}$ be the solution to 
$$dX_t = b(X_t) dt + \sm(X_t) dW_t, \quad X_0 = x_0 \in \br^d, \  0 \leq t \leq T,$$
where $W$ is a $d$-dimensional Brownian motion. The diffusion $(X_t)_{0 \leq t \leq T}$ is used to model many random dynamical phenomena in many fields of applications.  In practice, one often encounters the problem of evaluating functionals of the type $\bE[f(X)]$ for some given function $f:C[0,T] \to \mathbb{R}$. For example, in mathematical finance the function $f$ is commonly referred as a {\it payoff} function. Since they are rarely analytically tractable, these expectations are usually approximated using numerical schemes. One of the most popular approximation methods  is the Monte Carlo Euler-Maruyama method which consists of two steps: 
\begin{enumerate}
\item The diffusion process $(X_t)_{0 \leq t \leq T}$ is approximated using the Euler-Maruyama scheme $(X^h_t)_{0 \leq t \leq T}$ with a small time step $h>0$:
\begin{equation} \label{def:EulerX}
d X^h_t = b(X^h_{\eta_h(t)}) dt + \sm(X^h_{\eta_h(t)})dW_t, \quad X^h_0 = x_0, \ \eta_h(t) = kh, 
\end{equation}
for   $t\in [kh, (k+1)h)$, $k\in\mathbb{N}$.
\item The expectation $\bE[f(X)]$ is approximated using $\frac{1}{N} \sum_{i=1}^N f(X^{h,i})$ where $(X^{h,i})_{1\leq i \leq N}$ are $N$ independent copies of $X^h$. 
\end{enumerate}
This approximation procedure is influenced by two sources of errors: a discretization error and a statistical error
$$Err(f,h)  := Err(h) := \bE [f(X)]  -\bE [f(X^h)], \quad \text{and } \ \bE [f(X^h)] - \frac{1}{N}\sum_{i=1}^N f(X^{h,i}).$$ 
We say that the Euler-Maruyama approximation $(X^h)$ is of \emph{weak order} $\kappa >0$ for a class $\mathcal{H}$ of functions $f$ if there exists a constant $K(T)$ such that for any $f\in \mathcal{H}$,
$$| Err(f,h)| \leq K(T) h^\kappa.$$

The effect of the statistical error can be handled  by the classical central limit theorem or large deviation theory. Roughly speaking, if $f(X^h_T)$ has a bounded variance, the $L^2$-norm of the statistical error is bounded by $N^{-1/2}Var(X^h_T)^{1/2}$. Hence, if the Euler-Maruyama approximation is of weak order  $\kappa$, the optimal choice of the number of Monte Carlo iterations  should be $N= O(h^{-2\kappa })$ in order to minimize the computational cost. Therefore, it is of both theoretical and practical importance to understand the weak order of  the Euler-Maruyama approximation.

It has been shown that under sufficient regularity on the coefficients $b$ and $\sm$ as well as $f$, the weak order of the Euler-Maruyama approximation is $1$. This fact is proven by writing the discretization error $Err(f,h)$ as a sum of terms involving the solution of  a parabolic partial differential equation (see \cite{BT, TT, Mi, KP, G08}).   It should be noted here that besides the Monte Carlo Euler-Maruyama method, there are many other related approximation schemes for  $\bE[f(X_T)]$ which have either higher weak order or lower computational cost. For example, one can use Romberg extrapolation technique to obtain very high weak order as long as $Err(h)$ can be expanded in terms of powers of $h$ (see \cite{TT}). When $f$ is a Lipschitz function and the strong rate of approximation is known, one can implement a Multi-level Monte Carlo simulation which can significantly reduce the computation cost of approximating $\bE[f(X)]$ in many cases (see \cite{G}). It is also worth looking at some algebraic schemes introduced in \cite{Ku}. However, all the accelerated schemes mentioned above require sufficient regularity conditions on the coefficients $b, \sm$ and  the test function $f$.

The stochastic differential equations with non-smooth drift appear in many applications, especially when one wants to model sudden changes in the trend of a certain random dynamical phenomenon (see e.g., \cite{KP}). There are many papers studying the Euler-Maruyama approximations in this context. In \cite{GK} (see also \cite{CS}), it is shown that when the drift is only measurable, the diffusion coefficient is non-degenerate and  Lipschitz continuous then the  Euler-Maruyama approximations converges to the solution of stochastic differential equation. 
The weak order of the Euler-Maruyama scheme when both coefficients $b$ and $\sm$ as well as payoff functions $f$ are H\"older continuous has been studied in \cite{KP, MP}. 
In the papers \cite{KLY} and \cite{NT, NT_IMA}, the authors studied the weak and strong convergent rates of the Euler-Maruyama scheme for specific classes of stochastic differential equations with discontinuous drift. We would like to note here some very recent papers on the unbiased simulation \cite{HTT} and exact simulation \cite{BK, DMR, EM, PRT} methods for stochastic differential equations with irregular coefficients.

The aim of the present paper is to investigate  the weak order of the Euler-Maruyama approximation for stochastic differential equations whose diffusion coefficient $\sigma$ is constant,  whereas the drift coefficient $b$ may have a very low regularity, or could even be  discontinuous. More precisely, we consider a class $\mathcal{A}$  of functions which contains not only smooth functions but also some discontinuous one such as indicator function.
The drift $b$ will then be assumed to be either in  $\mathcal{A}$ or $\alpha$-H\"older continuous.   It should be noted that no smoothness assumption on the payoff function $f$ is needed in our framework. As a by product of our method, we  establish the weak order of the Euler-Maruyama approximation for some particular functionals $f$ which include
the path-wise maximum of the diffusion, integral of diffusion with respect to time as well as the approximation of 
a diffusion processes killed when it leaves an open set. We also apply our method to study the weak approximation of reflected stochastic differential equation whose drift is H\"older continuous.

The remainder of this paper is organized as follows. In the next section we introduce some notations and assumptions for our framework together with the main results. All proofs are deferred to Section \ref{sec:proofs}.

\section{Main Results} \label{sec:main}
\subsection{Notations}

For an invertible $d\times d$-matrix $A=(A_{i,j})_{1\leq i,j\leq d}$, we define for all $x,y \in \br^d$,
\begin{align}\label{nomal}
g_{A}(x,y):=\frac{\exp\left(-\frac{1}{2}\langle A^{-1}(y-x),y-x \rangle \right)}{(2\pi)^{d/2} \sqrt{\det{A}}}.
\end{align}
In particular we denote $g_c(x,y)=g_{cI}(x,y)$ for $c \in \br$ where the matrix $I$ is the identity matrix.

A function $\zeta: \br^d \to \br$ is called {\it exponentially bounded} or {\it polynomially bounded} if there exist positive constants $K, p$ such that  $|\zeta (x)| \leq K e^{K|x|}$ or  $|\zeta (x)| \leq K(1+|x|^p)$, respectively.

Let $\mathcal{A}$ be a class of exponentially bounded functions $\zeta : \br^d \to \br$ such that there exists a sequence of functions $(\zeta _N)_{N \in \bN} \subset C^1(\br^d)$ satisfying:
\begin{equation} \label{repX}
\begin{cases}
\mathcal{A}(i): & \zeta _N \to \zeta  \text{ in } L^1_{loc}(\br^d), \\
\mathcal{A}(ii): &\sup_N |\zeta _N(x)| +|\zeta (x)|\leq K e^{K|x|}, \text{for all } x \in \br^d\\ 
\mathcal{A}(iii): & \sup_{N,u>0; \ a \in \br^d} e^{-K|a|-Ku}\int_{\br^d}  |\nabla \zeta _N(x+a)| \frac{e^{-|x|^2/u}}{u^{(d-1)/2}} dx < K, 
\end{cases}
\end{equation}
for some positive constant $K$. We call $(\zeta_N)_{N \in \mathbb{N}}$ an approximation sequence of $\zeta$ in $\mathcal{A}$.

The following propositions shows that this class  is quite large. 
\begin{proposition} \label{prop:A}
i) If $\xi, \zeta  \in \mathcal{A}$ then $\xi\zeta  \in \mathcal{A}$ and $a_1 \xi  + a_2 \zeta  \in \mathcal{A}$ for any $a_1, a_2 \in \br$. \\
ii) Suppose that $A$ is a non-singular $d \times d$-matrix, $B \in \br^d$. Then $\zeta  \in \mathcal{A}$ iff $\xi(x) := \zeta (Ax + B) \in \mathcal{A}$, for all $x \in \br^d$.
\end{proposition}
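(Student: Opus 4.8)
The plan is to verify the three defining conditions $\mathcal{A}(i)$--$\mathcal{A}(iii)$ directly, exhibiting an explicit approximating sequence in each case. Throughout, $\|A\|$ denotes the operator norm of the matrix $A$, and a constant named $K$ is allowed to change from line to line.

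For part i), let $(\xi_N)$ and $(\zeta_N)$ be approximation sequences of $\xi$ and $\zeta$. For the linear combination I take $a_1\xi_N + a_2\zeta_N \in C^1(\br^d)$; condition $\mathcal{A}(i)$ is immediate, $\mathcal{A}(ii)$ is clear since a linear combination of exponentially bounded functions is exponentially bounded, and $\mathcal{A}(iii)$ follows from the triangle inequality $|\nabla(a_1\xi_N+a_2\zeta_N)| \le |a_1||\nabla\xi_N| + |a_2||\nabla\zeta_N|$ inside the integral, with the factor $|a_1|+|a_2|$ absorbed into the constant. For the product I take $\xi_N\zeta_N \in C^1(\br^d)$. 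Condition $\mathcal{A}(i)$ follows by writing $\xi_N\zeta_N - \xi\zeta = \xi_N(\zeta_N-\zeta) + (\xi_N-\xi)\zeta$ and noting that on any compact $K$ the quantities $\sup_N\|\xi_N\|_{L^\infty(K)}$ and $\|\zeta\|_{L^\infty(K)}$ are finite by $\mathcal{A}(ii)$, so each term tends to $0$ in $L^1(K)$; condition $\mathcal{A}(ii)$ is again clear. The only real computation is $\mathcal{A}(iii)$ for the product: from $\nabla(\xi_N\zeta_N) = \zeta_N\nabla\xi_N + \xi_N\nabla\zeta_N$ and $\mathcal{A}(ii)$ we get $|\nabla(\xi_N\zeta_N)(x+a)| \le Ke^{K|a|}e^{K|x|}\big(|\nabla\xi_N(x+a)|+|\nabla\zeta_N(x+a)|\big)$, and the spurious factor $e^{K|x|}$ must be reabsorbed into the Gaussian weight. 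I would use the elementary bound $K|x| \le \tfrac{K^2u}{2} + \tfrac{|x|^2}{2u}$ (AM--GM), which gives $e^{K|x|}e^{-|x|^2/u} \le e^{K^2u/2}e^{-|x|^2/(2u)}$, and then substitute $v=2u$ so that the remaining integral is exactly of the form appearing in $\mathcal{A}(iii)$ for $\xi$ (up to the harmless constant $2^{(d-1)/2}$); the accumulated exponential factors $e^{K^2u/2}e^{Kv}e^{K|a|} = e^{K^2u/2}e^{2Ku}e^{K|a|}$ are killed by choosing the constant in $\mathcal{A}(iii)$ for $\xi\zeta$ large enough.

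For part ii), given $\zeta\in\mathcal{A}$ with approximating sequence $(\zeta_N)$, set $\xi_N(x) := \zeta_N(Ax+B) \in C^1(\br^d)$. Condition $\mathcal{A}(i)$ follows from the change of variables $y=Ax+B$, which maps compacts to compacts and introduces only the Jacobian $|\det A|^{-1}$; condition $\mathcal{A}(ii)$ follows from $|Ax+B| \le \|A\||x| + |B|$. For $\mathcal{A}(iii)$, use $\nabla\xi_N(x) = A^\top(\nabla\zeta_N)(Ax+B)$, the substitution $y=Ax$ (Jacobian $|\det A|^{-1}$), and the inequality $|A^{-1}y| \ge \|A\|^{-1}|y|$ to replace $e^{-|A^{-1}y|^2/u}$ by $e^{-|y|^2/(\|A\|^2u)}$; writing $v=\|A\|^2u$ turns the integral into the $\mathcal{A}(iii)$-form for $\zeta$ with shift $Aa+B$ and window $v$, after which the constants $\|A\|^d|\det A|^{-1}$, $e^{K|B|}$, $e^{K\|A\||a|}$, $e^{K\|A\|^2u}$ are absorbed into a new constant. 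The converse implication is the same computation applied to $\xi$ with $A,B$ replaced by $A^{-1}, -A^{-1}B$, using $\zeta(x) = \xi(A^{-1}x - A^{-1}B)$.

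The only genuinely delicate point is the Gaussian-weighted estimate in the product case of $\mathcal{A}(iii)$: one must ensure that the exponential growth $e^{K|x|}$ coming from $\mathcal{A}(ii)$ is reabsorbed without destroying uniformity in $u$, which is precisely what the quadratic bound together with the rescaling $u\mapsto 2u$ achieves. Everything else is bookkeeping with changes of variables, the triangle inequality, and the elementary estimate $|Ax+B|\le\|A\||x|+|B|$.
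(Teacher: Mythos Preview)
Your argument is correct. Each of the three defining conditions is verified directly, and the only nontrivial step --- absorbing the factor $e^{K|x|}$ into the Gaussian weight in the product case via $K|x|\le \tfrac{K^2u}{2}+\tfrac{|x|^2}{2u}$ followed by the rescaling $u\mapsto 2u$ --- is carried out cleanly. A minor bookkeeping remark: in the product case you pick up \emph{two} factors of $e^{K|a|}$ (one from $|\zeta_N(x+a)|\le Ke^{K|a|}e^{K|x|}$ and one from applying $\mathcal{A}(iii)$ to $\xi$), not one, but this is harmless since both are absorbed into the new constant $K'$.

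As for the comparison with the paper: this proposition is not proved in the paper itself; the authors simply cite the references \cite{KMN} and \cite{NT} for the proof. Your direct verification is therefore self-contained and stands on its own.
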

It is easy to verify that the class $\mathcal{A}$ contains all $C^1(\br^d)$ functions which has all first order derivatives polynomially bounded. Furthermore, the class $\mathcal{A}$ contains also some non-smooth  functions of the type $\zeta (x) = (x_1 - a)^+$ or $\zeta (x) = I_{a < x < b}$ for some $a, b \in \br^d$.
Moreover, we call a function $\zeta : \br^d \to \br$  \emph{monotone in each variable separately} if for each $i=1, \ldots, d$, the map $x_i \mapsto \zeta (x_1, \ldots, x_i, \ldots, x_n)$ is monotone for all $x_1, \ldots, x_{i-1},  x_{i+1}, \ldots, x_d \in \br$.
\begin{proposition} \label{classA}
Class  $\mathcal{A}$ contains all exponentially bounded functions which are monotone in each variable separately.
\end{proposition}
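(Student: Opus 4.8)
**Plan for proving Proposition 1.3 (that $\mathcal{A}$ contains all exponentially bounded functions monotone in each variable separately).**

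The plan is to construct an explicit approximation sequence $(\zeta_N)$ for a given exponentially bounded $\zeta$ that is monotone in each variable separately, and then verify conditions $\mathcal{A}(i)$–$\mathcal{A}(iii)$. The natural choice is mollification: let $\phi$ be a smooth, compactly supported probability density on $\br^d$ and set $\zeta_N(x) = (\zeta * \phi_N)(x) = \int_{\br^d} \zeta(x-y)\phi_N(y)\,dy$ with $\phi_N(y) = N^d\phi(Ny)$. Standard mollifier theory gives $\zeta_N \in C^\infty \subset C^1$, $\zeta_N \to \zeta$ in $L^1_{loc}$ (this is $\mathcal{A}(i)$), and — because $\phi_N$ is supported in a ball of radius $1/N$ and $\zeta$ is exponentially bounded — a uniform bound $\sup_N|\zeta_N(x)| + |\zeta(x)| \le K'e^{K'|x|}$ for a slightly enlarged constant (this is $\mathcal{A}(ii)$). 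Alternatively, if one wants to preserve monotonicity exactly, one can use a product mollifier $\phi(y)=\prod_{i=1}^d \psi(y_i)$, so that $\zeta_N$ is again monotone in each variable separately; I expect the product structure to be what makes the key estimate tractable.

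The crux is $\mathcal{A}(iii)$, which requires a uniform bound on
$$
\sup_{N,\,u>0,\,a\in\br^d} e^{-K|a|-Ku}\int_{\br^d} |\nabla\zeta_N(x+a)|\,\frac{e^{-|x|^2/u}}{u^{(d-1)/2}}\,dx.
$$
The point of monotonicity is that it controls the total variation of the partial derivatives along lines. Concretely, $\partial_i\zeta_N(x) = \int \zeta(x-y)\,\partial_i\phi_N(y)\,dy$, and for the product mollifier this equals $\int \zeta(x-y)\,\psi'(y_i)\prod_{j\ne i}\psi(y_j)\,dy$; since $x_i\mapsto\zeta(\cdots)$ is monotone, $\partial_i\zeta_N$ does not change sign in $x_i$ and one can estimate $\int_{\br} |\partial_i\zeta_N(x+a)|\,dx_i$ essentially by the oscillation of $\zeta$ over an interval of length $O(1/N)$, which is in turn controlled by the exponential bound $\mathcal{A}(ii)$. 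The strategy for $\mathcal{A}(iii)$ is: split $|\nabla\zeta_N|\le\sum_{i=1}^d|\partial_i\zeta_N|$; for each $i$, first integrate in the $x_i$ variable using monotonicity to get a bound uniform in $N$ (the factor $u^{-(d-1)/2}$ rather than $u^{-d/2}$ is precisely what is left after doing one of the $d$ Gaussian integrals "for free," i.e. the $x_i$-integral of the Gaussian is $\le \sqrt{\pi u}$ times the sup, and that $\sqrt u$ is swallowed by writing things correctly); then integrate the remaining $d-1$ Gaussian factors, each contributing a bounded $L^1$ mass, against the exponentially-bounded envelope; finally absorb the growth of the envelope into the $e^{-K|a|-Ku}$ prefactor by choosing $K$ large (completing the square in the exponent, using $e^{K|x+a|}e^{-|x|^2/u}\le e^{K|a|}e^{K|x|-|x|^2/u} \le e^{K|a|}e^{Ku/4\cdot\text{const}}$).

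The main obstacle I anticipate is bookkeeping rather than conceptual: making the monotonicity argument rigorous when $\zeta$ is only $L^1_{loc}$ and monotone in each variable separately (so each one-dimensional section is of bounded variation on compacts but may be unbounded globally), and carefully tracking how the exponential growth constant degrades through the convolution and interacts with the $u$-dependent Gaussian scaling so that the final supremum is genuinely finite uniformly in $a$ and $u$. A secondary point to handle cleanly is that $\zeta$ need not be bounded, so the oscillation estimate $|\zeta(x+a) - \zeta(x+a-e_i/N)| \le \text{(something)}\cdot e^{K|x+a|}$ must be obtained from monotonicity plus the envelope bound rather than from any modulus of continuity — monotone functions sandwiched between two exponential envelopes have controlled jumps only in an integrated sense, which is exactly why $\mathcal{A}(iii)$ is phrased as an integral condition. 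Once these technical points are dispatched, Proposition 1.2(i) can optionally be invoked to reduce, say, a general monotone function to a monotone-increasing one plus constants, but I expect the direct mollification argument to go through without that reduction.
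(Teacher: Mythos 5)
Note first that the paper does not actually prove this proposition; it defers to \cite{KMN} and \cite{NT}, where the argument is precisely the mollification argument you outline (product mollifier, $\mathcal{A}(i)$ and $\mathcal{A}(ii)$ by standard mollifier estimates, $\mathcal{A}(iii)$ by integrating out the $i$-th coordinate using monotonicity and then absorbing the exponential envelope into $e^{-K|a|-Ku}$ via the estimate (\ref{exp_inq})). So your overall strategy is the right one, and your accounting of why the normalization is $u^{-(d-1)/2}$ rather than $u^{-d/2}$ --- one of the $d$ integrations is obtained ``for free'' from monotonicity --- is the correct heart of the matter.

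There is, however, one claim in your plan that is false as stated and needs repair: that $\partial_i\zeta_N$ does not change sign in $x_i$. The definition of ``monotone in each variable separately'' allows the \emph{direction} of monotonicity in $x_i$ to depend on the remaining coordinates (e.g.\ $\zeta(x_1,x_2)=x_1x_2$ is increasing in $x_1$ for $x_2>0$ and decreasing for $x_2<0$). Since $\zeta_N(x',x_i)=\int \prod_{j\ne i}\phi_N^{(1)}(z_j)\,g_{x'-z'}(x_i)\,dz'$ averages one-dimensional slices $g_{w'}(s):=\int\zeta(w',s-z_i)\phi_N^{(1)}(z_i)\,dz_i$ that may be increasing for some $w'$ and decreasing for others, the averaged derivative $\partial_i\zeta_N(x',\cdot)$ can change sign, and $\zeta_N$ need not itself be separately monotone. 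The fix is to keep the absolute value outside the slice but inside the $z'$-average: bound $|\partial_i\zeta_N(x)|\le\int\prod_{j\ne i}\phi_N^{(1)}(z_j)\,|g'_{x'-z'}(x_i)|\,dz'$, note that each $g'_{w'}$ \emph{does} have constant sign in $x_i$, so that
$\int_{\br}|g'_{w'}(x_i)|e^{-x_i^2/u}dx_i=\bigl|\int_{\br}g'_{w'}(x_i)e^{-x_i^2/u}dx_i\bigr|\le \frac{2}{u}\int_{\br}|g_{w'}(x_i)|\,|x_i|e^{-x_i^2/u}dx_i\le Ce^{K|w'|+K^2u}$
by integration by parts and the envelope bound, and only then integrate over $z'$ and $x'$. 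With this slice-wise version of your monotonicity step, the rest of your plan (including the measurability/local integrability of $\zeta$ needed for $\mathcal{A}(i)$, which holds since $\zeta$ is locally bounded) goes through and yields $\mathcal{A}(iii)$ after enlarging $K$.
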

The proofs of Propositions \ref{prop:A} and \ref{classA} and further properties of class $\mathcal{A}$ were presented in \cite{KMN} and \cite{NT}.

We recall that a function $\zeta: \mathbb{R}^d \to \mathbb{R}^d$ is called $\alpha$-H\"older continuous for some $\alpha \in (0,1]$ if there exists a positive constant $C$ such that $|\zeta(x) - \zeta(y)| \leq C|x-y|^\alpha$ for all $x,y \in \mathbb{R}^d$.
We denote $\mathcal{M}(\br^d,\br^d)$ the class of all measurable functions $b:\br^d \to \br^d$.
Let $\alpha \in (0,1]$ and let $\mathcal{B}(\alpha)$ be the class fo functions given by
\begin{align}\label{classB}
	\mathcal{B}(\alpha)
	=\left\{
	b \in \mathcal{M}(\br^d,\br^d);~b=b^{H}+b^{A} \text{ where } b^H \text{ is $\alpha$-H\"older and } b_j^A \in \mathcal{A},~j=1,\ldots,d
	\right\}.
\end{align}
A function $b \in \mathcal{B}(\alpha)$ could be very irregular since its trajectory may both change continuously very fast because of the H\"older part $b^H$, and  be  discontinuous at some points because of the part $b^A \in \mathcal{A}$. Moreover, the class $\mathcal{A}$ does not contain any class of H\"older continuous functions $H^\beta$ for any $\beta \in (0,1)$ (see \cite{NT_IMA}).

\subsection{Weak approximation of stochastic differential equations}  \label{subs:sde}

Let $(\Omega, \mf, (\mf_t)_{t \geq 0}, \pr)$ be a filtered probability space and $(W_t)_{t \geq 0}$ be a $d$-dimensional standard Brownian motion. We consider a  $d$-dimensional stochastic differential equation
\begin{equation} \label{eqn:defX}
X_t = x_0 + \int_0^t b(X_s) ds + \sigma W_t, \quad x_0 \in \br^d, \ t \in [0,T],
\end{equation}
where $\sm$ is a $d\times d$ deterministic, uniformly elliptic matrix, that is for the matrix $a:=\sigma \sigma^*$, there exist $0<\underline{a}<\overline{a}<\infty$ such that for any $\xi \in \br^d$,
\begin{align*}
\underline{a}|\xi|^2 \leq \langle a\xi,\xi \rangle \leq \overline{a}|\xi|^2,
\end{align*}
and $b:\br^d \to \br^d$ is a Borel measurable function. Let $X^h, h >0,$ denote the  Euler-Maruyama approximation of $X$,
\begin{equation} \label{eq:eulerX}
X^h_t = x_0 + \int_0^t b(X^h_{\eta_h(s)})ds +\sigma  W_t, \quad t \in [0,T],
\end{equation}
where $\eta_h(s) = kh$ if $kh \leq s < (k+1)h$ for some nonnegative integer $k$.
In this paper, we  study the convergent rates of the error
$$Err(h) = \bE[f(X)] - \bE [f(X^h)]$$
as $h \to 0$ for some payoff function $f: C[0,T] \to \mathbb{R}$. 

\begin{remark}
Note that the uniformly elliptic condition plays an important role in establishing the convergence of the Euler-Maruyama approximation for SDEs with non-Lipschitz coefficients. In fact, Hairer, Hutzenthaler and Jentzen \cite[Theorem 5.1]{HHJ} constructed a class of $4$-dimensional SDEs whose drift coefficient is a smooth, bounded and non-Lipschitz function, and diffusion coefficient is a deterministic non-uniformly elliptic matrix  for which the Euler-Maruyama scheme does not converges with any polynomial rate, that is
	\begin{align*}
	\lim_{h \searrow 0} \frac{\bE[|X_T-X_T^h|]}{h^{\alpha}}
	=\lim_{h \searrow 0} \frac{|\bE[X_T]-\bE[X_T^h]|}{h^{\alpha}}
	=\left\{ \begin{array}{ll}
	\displaystyle 0 &\text{ if } \alpha=0,  \\
	\displaystyle \infty &\text{ if } \alpha>0.
	\end{array}\right.
	\end{align*}
\end{remark}

A Borel measurable function $\zeta: \br^d \to \br^d$ is called \emph{sub-linear growth} if $\zeta$ is bounded on compact sets and $\zeta (y) = o(|y|)$ as $y \to \infty$.
$\zeta$ is called  \emph{linear growth} if $|\zeta(y)| < c_1|y| + c_2$ for some positive constants $c_1, c_2$. 

\begin{remark}

\begin{itemize}
	\item[(i)]
	It has been shown recently in \cite{HJK} that when $b$ is of super-linear growth, i.e.,  there exist constants $C>0$ and $\theta>1$ such that $|\zeta(y)| \geq |y|^\theta $ for all $|y| > C$, then the Euler-Maruyama approximation (\ref{eq:eulerX}) converges neither in the strong mean square sense nor in  weak sense to the exact solution at a finite time point.
	It means that if $\bE[|X_T|^p]<\infty$ for some $p\in[1, \infty)$ then 
	$$\lim_{h \to 0}\bE\big[|X_T-X^h_T|^p\big] = \infty \quad \text{and} \quad \lim_{h \to 0}\big| \bE\big[|X_T|^p-|X^h_T|^p\big]\big| = \infty.$$
	Thus, in this paper we will consider the case that $b$ is of at most linear growth.
	
	\item[(ii)]
	In the one-dimensional case, $d=1$, it is well-known that if $\sigma \not = 0$ and $b$ is of linear growth,  then the strong existence and path-wise uniqueness hold for the equation (\ref{eqn:defX}) (see \cite{CE}). 
	
	\item[(iii)]
	In the multidimensional case, $d > 1$, it has been shown in \cite{V} that if $b$ is bounded then the equation (\ref{eqn:defX}) has a strong solution and the solution of (\ref{eqn:defX}) is strongly unique. Moreover, if $\sm$ is the identity matrix, then the equation (\ref{eqn:defX}) has a unique strong solution in the class of continuous processes such that 
	$\pr\big( \int_0^T |b(X_s)|^2ds < \infty\big) = 1$
	provided that $\int_{\br^d} |b(y)|^pdy < \infty$ for some $p> d \vee 2 $ (see \cite{KR}).
\end{itemize}

\end{remark}

Throughout this paper, we suppose that equation (\ref{eqn:defX}) has a  weak solution which is unique in the sense of probability law (see Chapter 5 \cite{KS}).


The following results requires no assumption on the smoothness of $f$. 

\begin{theorem} \label{main0}
Let $X$ be a  solution of the SDE \eqref{eqn:defX}.
Suppose that  $b\in \mathcal{B}(\alpha)$ and $b$ is of linear growth. Moreover, assume that $f:C[0,T] \to \br$ is bounded. Then
$$\lim_{h \to 0}\bE[f(X^h)] = \bE[f(X)].$$
\end{theorem}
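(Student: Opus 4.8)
The plan is to strip the drift off both $X$ and $X^h$ by Girsanov's theorem and reduce the claim to an $L^1$-convergence of the corresponding exponential weights; note that this route uses nothing about $f$ beyond boundedness, in contrast to weak-convergence arguments which would need $f$ continuous on path space. Work on a basis carrying a $d$-dimensional Brownian motion $(W_t)$ and put
$$\mE^h := \exp\!\Big(\int_0^T\!\langle \sm^{-1}b(x_0+\sm W_{\eta_h(s)}),dW_s\rangle - \tfrac12\int_0^T\!|\sm^{-1}b(x_0+\sm W_{\eta_h(s)})|^2\,ds\Big),$$
with $\mE$ defined the same way but with $W_s$ in place of $W_{\eta_h(s)}$. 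Girsanov's theorem applied to \eqref{eq:eulerX} identifies the law of $X^h$ with that of $x_0+\sm W$ reweighted by $\mE^h$, giving $\bE[f(X^h)] = \bE[f(x_0+\sm W)\,\mE^h]$, and likewise $\bE[f(X)] = \bE[f(x_0+\sm W)\,\mE]$ using the assumed weak uniqueness for \eqref{eqn:defX}. (That $\bE[\mE^h]=1$ is elementary for the Euler scheme, whose drift integrand is piecewise constant, so one conditions successively across the grid; for $\mE$ it follows from the linear growth of $b$ by a Bene\v{s}-type argument.) As $f$ is bounded,
$$\big|\bE[f(X^h)] - \bE[f(X)]\big| \le \|f\|_\infty\,\bE\big[|\mE^h - \mE|\big],$$
so everything reduces to showing $\bE[|\mE^h-\mE|]\to0$.

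Since $\mE^h,\mE\ge0$ and $\bE[\mE^h]=\bE[\mE]=1$, Scheff\'e's lemma turns convergence in probability $\mE^h\to\mE$ into $L^1$-convergence automatically, so it is enough to prove $\mE^h\to\mE$ in probability. Writing $\mE^h=\mE\exp(\Delta^h)$ and expanding the difference of exponents, this follows once
$$\bE\Big[\int_0^T |b(x_0+\sm W_{\eta_h(s)}) - b(x_0+\sm W_s)|^2\,ds\Big]\longrightarrow0,$$
which bounds the Lebesgue part of $\Delta^h$ via Cauchy--Schwarz and the It\^o-integral part via the isometry. Moving between this $L^2$ bound and the $L^1$ estimate actually established below is routine: localize at $\tau_R:=\inf\{t:|W_t|\ge R\}$, on which $b$ is bounded so the $L^1$ and $L^2$ integrands are comparable, and control the complementary event using that $\sup_{t\le T}|W_t|$ has Gaussian tails, so $\bE[e^{c\sup_{t\le T}|W_t|}]<\infty$ for all $c$, which absorbs the merely exponential growth of $b$.

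It remains to show $\bE\int_0^T |b(x_0+\sm W_{\eta_h(s)}) - b(x_0+\sm W_s)|\,ds\to0$. Decompose $b=b^H+b^A$ as in the definition of $\mathcal{B}(\alpha)$. For the H\"older part, $|b^H(x_0+\sm W_{\eta_h(s)})-b^H(x_0+\sm W_s)|\le C|W_{\eta_h(s)}-W_s|^\alpha$ and $\bE|W_{\eta_h(s)}-W_s|^\alpha\le Ch^{\alpha/2}$. For a component $b^A_j\in\mathcal{A}$, fix an approximation sequence $(\zeta_N)\subset C^1(\br^d)$ as in the definition of $\mathcal{A}$ (using Proposition~\ref{prop:A}(ii) to replace $\zeta:=b^A_j$ by $x\mapsto\zeta(x_0+\sm x)$, reducing to increments of $W$ itself). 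Writing $\zeta_N(W_s)-\zeta_N(W_{\eta_h(s)})=\int_0^1\nabla\zeta_N((1-\theta)W_{\eta_h(s)}+\theta W_s)\cdot(W_s-W_{\eta_h(s)})\,d\theta$, conditioning on the independent pair $(W_{\eta_h(s)},\,W_s-W_{\eta_h(s)})$, changing variables inside the spatial integral, and applying condition $\mathcal{A}(iii)$ to the resulting Gaussian average of $|\nabla\zeta_N|$, one gets $\bE|\zeta_N(W_s)-\zeta_N(W_{\eta_h(s)})|\le C\,\eta_h(s)^{-1/2}\sqrt h$ uniformly in $N$ (the times $s<h$ contribute only $O(h)$ through the bound $\mathcal{A}(ii)$), hence $\sup_N\bE\int_0^T|\zeta_N(W_s)-\zeta_N(W_{\eta_h(s)})|\,ds\le C\sqrt h$. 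Letting $N\to\infty$ and using $\mathcal{A}(i)$ (convergence in $L^1_{loc}$, transferred to the Gaussian laws of $W_s$ and $W_{\eta_h(s)}$) together with $\mathcal{A}(ii)$ (the uniform exponential bound against the faster Gaussian decay, giving uniform integrability) then yields $\bE\int_0^T|\zeta(W_s)-\zeta(W_{\eta_h(s)})|\,ds\le C\sqrt h\to0$. The step I expect to be the main obstacle is exactly this last one: interchanging the limits $N\to\infty$ and $h\to0$ for a possibly discontinuous $\zeta\in\mathcal{A}$, where no pointwise argument is available and everything rests on the precise form of $\mathcal{A}(iii)$ --- its normalization $e^{-K|a|-Ku}$ and kernel $u^{-(d-1)/2}e^{-|x|^2/u}$ are designed so that it delivers a bound on $|\nabla\zeta_N|$ averaged against Gaussian densities of every variance of order up to $T$, uniformly in $N$, which then combines with $\mathcal{A}(i)$--$\mathcal{A}(ii)$ to pass to the limit in $\zeta$.
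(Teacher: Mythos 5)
Your proposal is correct and follows essentially the same route as the paper: a Girsanov change of measure reducing the error to $\|f\|_\infty\,\bE[|Z_T-Z_T^h|]$, Scheff\'e's lemma to upgrade convergence in probability of the exponential weights to $L^1$-convergence, and an estimate on the increments $\bE[|\zeta(W_t)-\zeta(W_{\eta_h(t)})|]$ for $\zeta\in\mathcal{A}$ obtained exactly as in the paper's Lemma~\ref{lem:auxi} (mean value theorem for the approximants $\zeta_N$, condition $\mathcal{A}(iii)$ uniformly in $N$, then $\mathcal{A}(i)$--$\mathcal{A}(ii)$ to pass to the limit). The only cosmetic difference is that the paper proves the $L^2$ bound on the exponents directly (Lemma~\ref{dai5}, using Lemma~\ref{lem:auxi} for general $p$ via the uniform exponential bound), whereas you obtain the $L^1$ estimate first and upgrade by localization; both are valid.
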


If $b$ is of sub-linear growth, we can obtain the rate of weak convergence as follows.

\begin{theorem} \label{main1}
Let $X$ be a solution of the SDE \eqref{eqn:defX} and $h>0$.
Suppose that  $b\in \mathcal{B}(\alpha)$ and $b$ is of sub-linear growth.
Moreover, assume that  $f:C[0,T] \to \br$ satisfies  $\bE[| f(x_0+\sm W)|^r]<\infty$ for some $r>2$.
Then there exists a constant $C = C(x_0,b,\sigma,T)$, which depends neither on $h$ nor on $f$, such that 
$$| \bE[f(X)] - \bE [f(X^h)] | \leq C \Big(\bE[| f(x_0+\sm W)|^r] \Big)^{2/r} h^{\frac{\alpha}{2} \wedge \frac 14}.$$
\end{theorem}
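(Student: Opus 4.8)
The plan is to use Girsanov's theorem to reduce the problem to estimating the difference of two Radon--Nikodym densities, then control that difference using the structure of the class $\mathcal{B}(\alpha)$. Under the measure $\bQ$, write $Y_t := x_0 + \sm W_t$, so that both $X$ and $X^h$ can be realized as the same process $Y$ under suitably changed measures. More precisely, since $\sm$ is invertible and $b$ is of sub-linear (hence at most linear) growth, Girsanov's theorem gives new probability measures $\bQ^X$ and $\bQ^{X,h}$ under which $Y$ solves \eqref{eqn:defX} and \eqref{eq:eulerX} respectively; equivalently
$$\bE[f(X)] = \bE_{\bQ}\big[ f(Y)\, \mE_T \big], \qquad \bE[f(X^h)] = \bE_{\bQ}\big[ f(Y)\, \mE_T^h \big],$$
where $\mE_T = \exp\big( \int_0^T \langle \sm^{-1} b(Y_s), dW_s\rangle - \tfrac12 \int_0^T |\sm^{-1} b(Y_s)|^2 ds\big)$ and $\mE_T^h$ is the analogous exponential with $b(Y_s)$ replaced by $b(Y_{\eta_h(s)})$. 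Then $Err(h) = \bE_{\bQ}[f(Y)(\mE_T - \mE_T^h)]$, and by H\"older's inequality with the exponent $r>2$ on $f(Y)$ and a conjugate exponent $r'$ on the density difference,
$$|Err(h)| \leq \big(\bE_{\bQ}|f(Y)|^r\big)^{1/r}\, \big(\bE_{\bQ}|\mE_T - \mE_T^h|^{r'}\big)^{1/r'}.$$
The first factor is finite by hypothesis, so everything reduces to showing $\big(\bE_{\bQ}|\mE_T - \mE_T^h|^{r'}\big)^{1/r'} \leq C h^{\alpha/2 \wedge 1/4}$.

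To bound the $L^{r'}$-distance between the two stochastic exponentials, I would use the elementary inequality $|e^a - e^b| \le (e^a + e^b)|a-b|$ together with H\"older's inequality, which reduces matters to (i) uniform $L^q$-bounds on $\mE_T$ and $\mE_T^h$ for all $q<\infty$ — these follow from the sub-linear growth of $b$ and standard Gaussian/Novikov-type estimates, since sub-linear growth makes $\int_0^T |\sm^{-1}b(Y_s)|^2 ds$ have exponential moments of all orders — and (ii) an $L^q$-estimate on the difference of exponents
$$\log \mE_T - \log \mE_T^h = \int_0^T \langle \sm^{-1}(b(Y_s) - b(Y_{\eta_h(s)})), dW_s\rangle - \tfrac12 \int_0^T \big( |\sm^{-1}b(Y_s)|^2 - |\sm^{-1}b(Y_{\eta_h(s)})|^2\big) ds.$$
The Itô integral term is handled by the Burkholder--Davis--Gundy inequality, reducing it to $\bE\int_0^T |b(Y_s) - b(Y_{\eta_h(s)})|^{q} ds$, and the drift term similarly reduces (using the linear growth bound on $b$ and moment bounds on $Y$) to the same type of quantity $\bE\int_0^T |b(Y_s) - b(Y_{\eta_h(s)})|\,(\text{bounded moments})\, ds$. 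So the whole theorem comes down to the single key estimate
$$\int_0^T \bE_{\bQ}\big[ |b(Y_s) - b(Y_{\eta_h(s)})|^{q}\big]\, ds \le C h^{q(\alpha/2 \wedge 1/4)} \quad \text{(for the relevant finite } q\text{)}.$$

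The main obstacle is precisely this last estimate, because $b = b^H + b^A$ and the two pieces behave very differently. For the H\"older part $b^H$, we have $|b^H(Y_s) - b^H(Y_{\eta_h(s)})| \le C|Y_s - Y_{\eta_h(s)}|^\alpha$, and since $Y$ is just a Brownian motion with drift-free increments over $[\eta_h(s),s]$ of length at most $h$, $\bE|Y_s - Y_{\eta_h(s)}|^{q\alpha} \le C h^{q\alpha/2}$, giving the $h^{\alpha/2}$ contribution. For the irregular part $b^A$, pointwise H\"older control is unavailable; instead I would exploit condition $\mathcal{A}(iii)$ on an approximating sequence $(b^A_N)$. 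The strategy is to interpolate: replace $b^A$ by $b^A_N$ (controlling the error via $L^1_{loc}$-convergence $\mathcal{A}(i)$ and the density of $Y_s$, which is Gaussian and hence bounded with Gaussian tails), then for the smooth $b^A_N$ write $b^A_N(Y_s) - b^A_N(Y_{\eta_h(s)}) = \int_{\eta_h(s)}^s \nabla b^A_N(Y_u)\cdot dY_u + \tfrac12\int \Delta b^A_N(Y_u) du$ — or more robustly, use the occupation-time/heat-kernel representation so that $\mathcal{A}(iii)$ furnishes exactly the bound $\bE\int_{\eta_h(s)}^s |\nabla b^A_N(Y_u)| \frac{e^{-|\cdot|^2/u}}{u^{(d-1)/2}}\,du \le C\sqrt{s - \eta_h(s)} \le C\sqrt h$ after integrating against the Gaussian density of $Y_{\eta_h(s)}$. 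Optimizing the truncation level $N = N(h)$ against the $L^1_{loc}$ error then yields the $h^{1/4}$ rate. This is the technical heart of the paper, and the interplay of the Girsanov change of measure (which requires careful moment bookkeeping because $f(Y)$ only has $r>2$ moments) with the approximation-sequence estimate for $b^A$ is where all the work lies; the exponent $\alpha/2 \wedge 1/4$ is the minimum of the two contributions identified above.
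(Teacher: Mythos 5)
Your overall architecture is exactly the paper's: Girsanov to rewrite both expectations against the single Gaussian process $x_0+\sm W$, the inequality $|e^a-e^b|\le(e^a+e^b)|a-b|$, H\"older's inequality exploiting $r>2$ together with all-order moment bounds on the stochastic exponentials (which indeed require sub-linear rather than merely linear growth of $b$ --- see Remark \ref{rmk:exponent}), and finally the reduction to $\int_0^T\bE|b(x_0+\sm W_s)-b(x_0+\sm W_{\eta_h(s)})|^2\,ds$, with the H\"older part contributing $h^{\alpha}$ and the $\mathcal{A}$-part contributing $h^{1/2}$, whence $\|Y_T-Y_T^h\|_2\le Ch^{\alpha/2\wedge 1/4}$.

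There is, however, a genuine flaw in your plan for the $\mathcal{A}$-part, which is the technical heart. You propose to replace $b^A$ by the smooth approximant $b^A_N$, bound the replacement error via the $L^1_{loc}$-convergence $\mathcal{A}(i)$, and then ``optimize the truncation level $N=N(h)$'' to extract the rate $h^{1/4}$. This cannot work as stated: condition $\mathcal{A}(i)$ is purely qualitative and carries no rate of convergence in $N$, so there is nothing to optimize against. The correct mechanism (Lemma \ref{lem:auxi}) is that the mean-value-theorem/heat-kernel computation combined with $\mathcal{A}(iii)$ yields a bound that is already \emph{uniform in $N$}, namely $\sup_N\bE|\zeta_N(W_t)-\zeta_N(W_s)|^p\le C_p\sqrt{t-s}/\sqrt{s}$; one then simply lets $N\to\infty$ using $\mathcal{A}(i)$--$\mathcal{A}(ii)$ with no balancing of errors. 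Consequently the exponent $1/4$ does not come from an $N$-optimization at all: it comes from the fact that for discontinuous $\zeta\in\mathcal{A}$ one only ever gets a single factor $\sqrt{t-s}$ (independently of the power $p$), so $\int_0^T\bE|\Delta b^A|^2\,ds=O(h^{1/2})$ and the square root in the $L^2$-norm produces $h^{1/4}$. Two further points you should not gloss over: the bound carries the singular factor $1/\sqrt{\eta_h(s)}$, so the interval $[0,h]$ must be treated separately (there the linear-growth bound gives an $O(h)$ contribution, and $\int_h^T\sqrt{s-\eta_h(s)}/\sqrt{\eta_h(s)}\,ds\le Ch^{1/2}$); and your first-listed option of applying It\^o's formula to $b^A_N$ would require control of second derivatives, which $\mathcal{A}$ does not provide --- only your ``more robust'' heat-kernel variant is viable.
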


The following result concerns with the approximation of maximum  of SDEs.
\begin{corollary}\label{main1.1}
	Assume the hypotheses of Theorem \ref{main1}.
	Moreover, suppose that $g:\br \to \br$ is $\beta$-H\"older continuous with $\beta \in (0,1]$.
	Then there exists  a constant $C$ which does not depend of $h$ such that 
	\begin{align*}
	\left| \bE\left[ g\left( \max_{0\leq s \leq T}|X_s| \right) \right] - \bE\left[g\left(\max_{0\leq s \leq T}|X_{\eta_h(s)}^{h}| \right)\right] \right|
	\leq C \left\{ h^{\frac{\alpha}{2} \wedge \frac{1}{4}} + (	h \log(1/h))^{\beta/2} \right\}.
	\end{align*}
\end{corollary}

For an integral type functional, we obtain the following corollary.

\begin{corollary}\label{main1.5}
Let $h=T/n$ for some $n \in \bN$.
If the drift coefficient $b \in \mathcal{B}(\alpha)$ is bounded, then for any Lipschitz continuous function $f$ and $g \in \mathcal{B}(\beta)$ with $\beta \in (0,1]$, there exists  a constant $C$ which does not depend of $h$ such that 
\begin{align*}
\left| \bE\left[ f\left( \int_0^Tg(X_s)ds \right) \right] - \bE\left[f\left(\int_0^Tg(X_{\eta_h(s)}^h)ds \right)\right] \right|
\leq C h^{\frac{\alpha}{2} \wedge \frac{\beta}{2} \wedge \frac{1}{4}}.
\end{align*}
\end{corollary}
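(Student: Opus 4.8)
The plan is to interpose the integral functional evaluated along the \emph{continuous} Euler path $X^h$, thereby splitting the error into (I) the weak error of a \emph{fixed} path functional, to which Theorem~\ref{main1} applies verbatim, and (II) the error of replacing $\int_0^T g(X^h_s)\,ds$ by its left-point Riemann sum $\int_0^T g(X^h_{\eta_h(s)})\,ds$. Since $f$ is Lipschitz, $|f(a)-f(b)|\le L|a-b|$, the triangle inequality gives
$$
\left|\bE\left[f\left(\int_0^T g(X_s)ds\right)\right]-\bE\left[f\left(\int_0^T g(X^h_{\eta_h(s)})ds\right)\right]\right|
\le \underbrace{\big|\bE[\Phi(X)]-\bE[\Phi(X^h)]\big|}_{(\mathrm{I})}+\underbrace{L\,\bE\int_0^T\big|g(X^h_s)-g(X^h_{\eta_h(s)})\big|\,ds}_{(\mathrm{II})},
$$
where $\Phi(w):=f\big(\int_0^T g(w_s)\,ds\big)$ is $h$-independent and Borel on $C[0,T]$ (its measurability is standard).

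For $(\mathrm{I})$ I would invoke Theorem~\ref{main1}: $b$ is bounded, hence of sub-linear growth, so it only remains to check $\bE[|\Phi(x_0+\sm W)|^r]<\infty$ for some $r>2$. Writing $g=g^H+g^A$ with $g^H$ being $\beta$-H\"older and $g^A_j\in\mathcal A$ for $j=1,\dots,d$, both summands are exponentially bounded, so $|g(y)|\le C(1+|y|^\beta+e^{C|y|})\le C'e^{C'|y|}$ and thus $|\Phi(x_0+\sm W)|\le|f(0)|+LC'\int_0^T e^{C'|x_0+\sm W_s|}\,ds$, which has finite moments of every order because $x_0+\sm W_s$ is Gaussian. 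Theorem~\ref{main1} then yields $(\mathrm{I})\le C h^{\frac\alpha2\wedge\frac14}$.

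For $(\mathrm{II})$ I would use $|g(x)-g(y)|\le|g^H(x)-g^H(y)|+\sum_{j=1}^d|g^A_j(x)-g^A_j(y)|$ and treat the two pieces separately. The H\"older piece is routine: $|g^H(X^h_s)-g^H(X^h_{\eta_h(s)})|\le[g^H]_\beta|X^h_s-X^h_{\eta_h(s)}|^\beta$, and since $b$ is bounded and $\beta\le1$, $\bE|X^h_s-X^h_{\eta_h(s)}|^\beta\le(\|b\|_\infty(s-\eta_h(s)))^\beta+\|\sm\|^\beta\bE|W_s-W_{\eta_h(s)}|^\beta\le Ch^{\beta/2}$, so this piece contributes at most $CTh^{\beta/2}$. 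The $\mathcal A$-piece rests on the one-step estimate
$$
\bE\big|\zeta(X^h_s)-\zeta(X^h_{\eta_h(s)})\big|\le C e^{K|x_0|}\left(\sqrt{s-\eta_h(s)}+\sqrt{\tfrac{s-\eta_h(s)}{\eta_h(s)}}\,\right),\qquad h\le s\le T,\ \ \zeta\in\mathcal A,
$$
together with the trivial bound $\bE|\zeta(X^h_s)-\zeta(x_0)|\le\bE[|\zeta(X^h_s)|]+|\zeta(x_0)|\le C$ on $[0,h)$. Granting these and using $h=T/n$, the interval $[0,h)$ contributes $O(h)$, while on $[kh,(k+1)h)$ with $k\ge1$ the contribution is $\tfrac23 C e^{K|x_0|}\big(h^{3/2}+h\,k^{-1/2}\big)$; summing over $k$ with $\sum_{k=1}^{n-1}k^{-1/2}\le2\sqrt n$ gives a total $O(\sqrt h)$. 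Hence $(\mathrm{II})\le C\big(h^{\beta/2}+\sqrt h\big)=Ch^{\beta/2}$, and combining with $(\mathrm{I})$ yields the asserted bound $Ch^{\frac\alpha2\wedge\frac\beta2\wedge\frac14}$.

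What remains is the one-step estimate, which I expect to be the main obstacle. Conditioning on $\mg_{\eta_h(s)}$ and setting $t_0=\eta_h(s)$, $u=s-t_0$, $\xi=X^h_{t_0}$, one has $X^h_s=\xi+b(\xi)u+\sm(W_s-W_{t_0})$ with $W_s-W_{t_0}\sim\mathcal N(0,uI)$ independent of $\mg_{t_0}$, so $\bE|\zeta(X^h_s)-\zeta(X^h_{t_0})|=\bE[\psi_u(X^h_{t_0})]$ with $\psi_u(y)=\bE|\zeta(y+b(y)u+\sm\Delta)-\zeta(y)|$, $\Delta\sim\mathcal N(0,uI)$. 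One then replaces $\zeta$ by its approximating sequence $(\zeta_N)\subset C^1$, inserts the intermediate point $y+\sm\Delta$ to isolate the (bounded) drift shift $b(y)u$, and applies the fundamental theorem of calculus together with an affine change of variables and the elementary inequality $|w|e^{-|w|^2/v}\le C\sqrt v\,e^{-|w|^2/2v}$; this turns $\psi_u(y)$ into integrals of exactly the shape controlled by hypothesis~$\mathcal A(iii)$ at Gaussian scale $\asymp u$. For $s\ge h$ one integrates the resulting bound against the law of $X^h_{t_0}$, using the Gaussian-type upper bound $p^{X^h}_{t_0}(y)\le Ct_0^{-d/2}e^{-c|y-x_0|^2/t_0}$ (uniform in $h$, available because $b$ is bounded and $\sm$ is constant and non-degenerate) and splitting $t_0^{-d/2}=t_0^{-1/2}\cdot t_0^{-(d-1)/2}$, the second factor being absorbed once more by $\mathcal A(iii)$; the limit $N\to\infty$ is passed using $\mathcal A(i)$, $\mathcal A(ii)$ and the exponential moments of $X^h_s$ and $X^h_{t_0}$ (both absolutely continuous when $s\ge h$). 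The delicate point — and the crux of the whole argument — is precisely this matching of the \emph{time}-degenerate transition Gaussian over $[\eta_h(s),s]$ with the \emph{spatial} Gaussian decay of the density of $X^h_{\eta_h(s)}$, so that the scale in $\mathcal A(iii)$ is consistent and the one-step bound carries the factor $(\eta_h(s))^{-1/2}$; it is this factor that makes the left-point Riemann error summable to $O(\sqrt h)$. Everything else is routine.
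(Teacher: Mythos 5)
Your proposal is correct and follows essentially the same route as the paper: the identical decomposition into the weak error of the fixed functional $\Phi(w)=f\big(\int_0^T g(w_s)\,ds\big)$ (handled by Theorem~\ref{main1} after checking $\bE[|\Phi(x_0+\sm W)|^r]<\infty$) plus the left-point Riemann-sum error, the latter controlled via the Gaussian upper bound for the density of $X^h_{\eta_h(s)}$ and the one-step estimate $\bE|g(X^h_s)-g(X^h_{\eta_h(s)})|\leq C\sqrt{s-\eta_h(s)}/\sqrt{\eta_h(s)}+Ch^{\beta/2}$ adapted from Lemma~\ref{lem:auxi}. The only difference is that you spell out the proof of that one-step estimate, which the paper delegates to the argument of Lemma~\ref{lem:auxi} and Lemma~3.5 of \cite{NT}.
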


\begin{remark} \label{rk:Macke}
In the paper \cite{M}, the author considered the weak rate of convergence of the Euler-Maruyama scheme for equation (\ref{eqn:defX}) in the case of a one-dimensional diffusion. It was claimed that if $b$ was Lipschitz continuous, the weak rate of approximation is of order $1$. However, we would like to point out that the given proof  contains several gaps (see for instance Lemma 2 of \cite{M} and Remark \ref{rmk:exponent} below) which leave us unsure about the claim. 

\end{remark}

\begin{remark} \label{rk:compareKP}
It has been shown in \cite{KP, MP} that for a stochastic differential equation with $\alpha$-H\"older continuous drift and diffusion coefficients with $\alpha \in (0,1)$, one has
$$| \bE[f(X_T)] - \bE [f(X^h_T)]| \leq C h^{\alpha/2},$$
where $f \in C_b^2$ and the second derivative of $f$ is $\alpha$-H\"older continuous.
On the other hand, in \cite{GR}, Gy\"ongy and R\'asonyi have obtained the strong rate of convergence for  a one-dimensional stochastic differential equation whose drift is  the sum of a Lipschitz continuous and a monotone decreasing H\"older continuous function, and its diffusion coefficient is H\"older continuous.
In \cite{NT}, the authors improve  the results in \cite{GR}.
More precisely, we assume that the drift coefficient $b$ is a bounded and one-sided Lipschitz function, i.e., there exists a positive constant $L$ such that for any $x, y \in \br^d$, $\langle x-y, b(x) - b(y) \rangle_{\br^d} \leq L |x-y|^2$, $b_j \in \mathcal{A}$ for any $j= 1, \ldots, d$ and the diffusion coefficient $\sigma$ is bounded, uniformly elliptic and $(1/2+\alpha)$-H\"older continuous with $\alpha \in [0,1/2]$.
Then for $h=T/n$, it holds that
\begin{align*}
\bE[|X_T - X^h_T|]
&\leq \left\{ \begin{array}{ll}
\displaystyle C(\log 1/h)^{-1} &\text{if } \alpha = 0 \text{ and } d =1,  \\
\displaystyle Ch^{\alpha} &\text{if } \alpha \in (0, 1/2] \text{ and } d=1,\\
\displaystyle Ch^{1/2} &\text{if } \alpha = 1/2 \text{ and } d\geq 2.
\end{array}\right.
\end{align*}
Therefore, if the payoff function $f$ is Lipschitz continuous, {it is straightforward to verify that 
\begin{align*}
|\bE[f(X_T) - f(X^h_T)]|
&\leq \left\{ \begin{array}{ll}
\displaystyle C(\log 1/h)^{-1} &\text{if } \alpha = 0 \text{ and } d =1,  \\
\displaystyle Ch^{\alpha} &\text{if } \alpha \in (0, 1/2] \text{ and } d=1,\\
\displaystyle Ch^{1/2} &\text{if } \alpha = 1/2 \text{ and } d\geq 2.
\end{array}\right.
\end{align*}
}
\end{remark}

In the following  we consider a special case of the functional $f$.  More precisely, we are interested in the law at time $T$ of the diffusion $X$  killed when it leaves an open set. Let $D$ be an open subset of $\mathbb{R}^d$ and denote 
$\tau_D = \inf \{ t >0: X_t \not \in D\}$. Quantities of the  type $\bE[g(X_T)\1_{(\tau_D>T)}]$ appear in many domains, e.g. in financial mathematics when one computes the price of a barrier option on a $d$-dimensional asset price random variable $X_t$ with characteristics $f, T$ and $D$ (see \cite{G00, GoMe} and the references therein for more detail). We approximate $\tau_D$ by 
$\tau^h_D = \inf \{ kh >0: X^h_{kh} \not \in D, k = 0,1,\ldots\}.$

\begin{corollary} \label{main2}
Assume the hypotheses of Theorem \ref{main1}.
Furthermore, we assume 
\begin{enumerate}
\item[(i)] $D$ is of class $C^\infty$ and  $\partial D$ is a compact set (see \cite{GT02} and \cite{G00});
\item[(ii)] $g: \mathbb{R}^d \to \mathbb{R}$ is a measurable function, satisfying $d(\emph{Supp}(g), \partial D) \geq 2\epsilon$ for some $\epsilon >0$ and $\|g\|_\infty = \sup_{x \in \br^d}|g(x)| < \infty$.
\end{enumerate}
Then for any $p>1$, there exist constants $C$ and $C_p$ independent of $h$  such that
\begin{equation} \label{emain2}
 \Big| \bE[g(X_T)\1_{(\tau_D > T)}] - \bE [g(X^h_T)\1_{(\tau^h_D > T)}] \Big| \leq Ch^{\frac{\alpha}{2} \wedge \frac14} + \frac{C_p\|g^p\|_\infty}{1 \wedge \epsilon^{4/p}}h^{\frac{1}{2p}}.
\end{equation}
\end{corollary}

Finally, we consider the approximation for the density of SDE \eqref{eqn:defX}.
Let $p_t(x_0,\cdot)$ and $p_t^{h}(x_0,\cdot)$ be the density functions of $X_t$ and $X_t^{h}$ respectively.
Then we have the following rate of convergence.
\begin{theorem}\label{main2.5}
	Suppose that  $b\in \mathcal{B}(\alpha)$ and bounded.
	Then for any $p>d$ and $r>1$, there exists constants $C_{p,r}$ and $c_p$ such that for any $y \in \br^d$ and $h\in (0,T/2)$,
	\begin{align*}
	|p_T(x_0,y)-p_T^h(x_0,y)|
	\leq  C_{p,r}g_{c_pT}(x_0,y) \left\{\frac{h}{T^{1/2}}+h^{\alpha/2}+h^{1/(2pr)} \right\},
	\end{align*}
	where $g_{c_pT}(x_0,y)$ is the density function of normal distribution defined in \eqref{nomal}.
\end{theorem}

\begin{remark}
	(i) Note that if $d=1$, we can choose $p \in (1,2)$ and $r=2/p$, and then
	\begin{align*}
	|p_T(x_0,y)-p_T^h(x_0,y)|
	&\leq  C_{p,r}g_{c_pT}(x_0,y) \left\{\frac{h}{T^{1/2}}+h^{\alpha/2}+h^{1/4} \right\}.
	\end{align*}
	(ii) Konakov and Menozzi \cite{KoMe} obtain a better rate of convergence under a further assumption that the drift coefficient is  piecewise smooth (see Theorem 2 in \cite{KoMe}).
	However, in our setting, the drift coefficient may have infinite number of discontinuous points.
\end{remark}

\subsection{Weak approximation of reflected stochastic differential equations} \label{subs:rsde}

We first recall the Skorohod problem.
\begin{lemma}[\cite{KS}, Lemma III.6.14] \label{Skorohod}
Let $z \geq 0$ be a given number and  $y :[0,\infty) \to \br$ be a continuous function with $y_0=0$.
Then there exists a unique continuous function $\ell=(\ell_t)_{t \geq 0}$ satisfying the following conditions:
\begin{itemize}
\item[(i)]   $x_t:=z+ y_t+\ell_t \geq 0, 0 \leq t < \infty$; 
\item[(ii)] $\ell$ is a non-decreasing function with $\ell_0=0$ and  $\ell_t=\int_0^t \1(x_s=0) d\ell_s$.
\end{itemize}
Moreover, $\ell=(\ell_t)_{t \geq 0}$ is given by
\begin{align*}
\ell_t = \max\{ 0, \max_{0\leq s \leq t} (-z-y_s)\}
    = \max_{0\leq s \leq t} \max(0,\ell_s-x_s).
\end{align*}
\end{lemma}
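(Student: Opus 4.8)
The plan is to exhibit the solution in closed form, namely to set $\ell_t:=\max\{0,\ \max_{0\le s\le t}(-z-y_s)\}$, to check directly that this $\ell$ satisfies (i) and (ii), and then to prove uniqueness by a short energy ($L^2$) estimate that exploits the ``flat-off'' condition (ii). The second representation $\ell_t=\max_{0\le s\le t}\max(0,\ell_s-x_s)$ is then immediate, since for this $\ell$ one has $\ell_s-x_s=-z-y_s$, so the two maxima coincide.

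For existence, since $y$ is continuous with $y_0=0$ and $z\ge 0$, the running maximum $t\mapsto\max_{0\le s\le t}(-z-y_s)$ is continuous and non-decreasing; hence $\ell$ is continuous, non-decreasing, and $\ell_0=\max\{0,-z\}=0$. Putting $x_t:=z+y_t+\ell_t$, property (i) holds because $\ell_t\ge -z-y_t$ by construction, so $x_t\ge 0$. For the flat-off condition it suffices to show that the Stieltjes measure $d\ell$ is carried by $\{s:x_s=0\}$, i.e. that every point of increase $t$ of $\ell$ is a zero of $x$. If $t$ is such a point, then for every small $\varepsilon>0$ the maximum defining $\ell_{t+\varepsilon}$ is attained at some $s_\varepsilon\in(t-\varepsilon,t+\varepsilon]$ with $\ell_{t+\varepsilon}=-z-y_{s_\varepsilon}>0$; combining $\ell_{s_\varepsilon}\ge -z-y_{s_\varepsilon}$ with $\ell_{s_\varepsilon}\le\ell_{t+\varepsilon}$ forces $\ell_{s_\varepsilon}=-z-y_{s_\varepsilon}$, that is $x_{s_\varepsilon}=0$. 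Letting $\varepsilon\downarrow 0$ we get $s_\varepsilon\to t$, and continuity of $x$ yields $x_t=0$.

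For uniqueness, suppose $(\ell,x)$ and $(\tilde\ell,\tilde x)$ both satisfy (i)--(ii). Then $\Delta_t:=x_t-\tilde x_t=\ell_t-\tilde\ell_t$ is continuous, of bounded variation, with $\Delta_0=0$, so the chain rule for continuous functions of bounded variation gives
\[
\Delta_t^2=2\int_0^t\Delta_s\,d\ell_s-2\int_0^t\Delta_s\,d\tilde\ell_s .
\]
By (ii) the measure $d\ell$ is supported on $\{x=0\}$, where $\Delta_s=-\tilde x_s\le 0$, so the first integral is $\le 0$; symmetrically $d\tilde\ell$ is supported on $\{\tilde x=0\}$, where $\Delta_s=x_s\ge 0$, so the second integral is $\ge 0$. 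Hence $\Delta_t^2\le 0$, which forces $x\equiv\tilde x$ and therefore $\ell\equiv\tilde\ell$.

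The main obstacle is the flat-off identity in (ii): one must argue carefully that a time at which the running maximum strictly increases is necessarily a zero of $x$, and conversely that this support property is exactly what is needed to rewrite $\ell_t=\int_0^t\1(x_s=0)\,d\ell_s$; the companion point in the uniqueness argument is justifying the integration-by-parts (chain rule) step at the level of Stieltjes integrals against $d\ell$ and $d\tilde\ell$, which uses continuity of the paths. Everything else is routine.
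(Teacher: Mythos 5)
Your proof is correct. Note, however, that the paper does not prove this lemma at all: it is quoted verbatim from Karatzas--Shreve (Lemma III.6.14 of \cite{KS}), so there is no in-paper argument to compare against. Your existence part (define $\ell_t=\max\{0,\max_{0\le s\le t}(-z-y_s)\}$ and verify that every point of increase of $\ell$ is a zero of $x$, hence $d\ell$ is carried by $\{x=0\}$) coincides with the classical construction. For uniqueness you use the $L^2$ identity $\Delta_t^2=2\int_0^t\Delta_s\,d\ell_s-2\int_0^t\Delta_s\,d\tilde\ell_s$ together with the sign of $\Delta$ on the supports of $d\ell$ and $d\tilde\ell$; this is the Tanaka/Lions--Sznitman energy argument, which has the advantage of generalizing to multidimensional reflection, whereas the Karatzas--Shreve proof is more elementary: if $\ell_t>\tilde\ell_t$ for some $t$, set $\tau=\sup\{s\le t:\ell_s=\tilde\ell_s\}$, observe that $x_s>\tilde x_s\ge 0$ on $(\tau,t]$, so the flat-off condition forces $\ell$ to be constant there, contradicting $\ell_t>\ell_\tau=\tilde\ell_\tau\le\tilde\ell_t$. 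Both routes are valid; your chain-rule step for continuous functions of bounded variation is legitimate since no quadratic-variation correction arises. The final identity $\ell_t=\max_{0\le s\le t}\max(0,\ell_s-x_s)$ follows, as you say, from $\ell_s-x_s=-z-y_s$ and the interchange $\max_s\max(0,a_s)=\max(0,\max_s a_s)$.
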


Let us consider the following one-dimensional reflected stochastic differential equation valued in $[0,\infty)$ such that
\begin{align} \label{RSDE} 
\begin{cases} X_t &= x_0 + \int_0^t b(X_s) ds + \sigma W_t + L_t^0(X), x_0 \in [0,\infty), t \in [0,T],\\
L_t^0(X) &= \int_0^t \1_{(X_s = 0)} dL_s^0(X), 
\end{cases}
\end{align}
where $(L_t^0(X))_{0 \leq t \leq T}$ is a non-decreasing continuous process stating at the origin and
$L_t^{0}(X)$ is called the symmetric local time of $X$ up to time $t$ at the level $0$.
Equation \eqref{RSDE} will be studied under the assumptions that $b$ is of sub-linear growth and H\"older continuous and $\sigma$ is a non-zero constant.
Moreover, we assume that the SDE (\ref{RSDE}) has a weak solution and the uniqueness in the sense of probability law holds (see \cite{RS, T}).
Using Lemma \ref{Skorohod}, we have
\begin{align*}
L_t^0(X) = \sup_{0 \leq s \leq t} \max \left(0, L_s^0(X)- X_s  \right).
\end{align*}

Now we define the Euler-Maruyama scheme $X^h=(X^h_t)_{0\leq t \leq T}$ for the reflected stochastic differential equation (\ref{RSDE}).
Let $X_0^h := x_0 $ and define
\begin{align}\label{EM_RSDE}
X_t^h &= x_0 + \int_0^t b(X_{\eta_h(s)}^h) ds + \sigma W_t + L_t^0(X^h).
\end{align}
The existence of the pair $(X_t^h, L_t^0(X^h))_{0\leq t \leq T}$ is deduced from  Lemma \ref{Skorohod}. 
Moreover
$$L_t^0(X^h) = \int_0^t \1_{(X_s^h = 0)} dL_s^0(X^h).$$
By the definition of the Euler-Maruyama scheme, we have the following representation.
For each $k=0, 1,\ldots$,
\begin{align*}
X_{(k+1)h}^h = X_{kh}^h + b(X_{kh}^h) h + \sigma (W_{(k+1)h}-W_{kh}) + \max(0, A_k-X_{kh}^h),
\end{align*}
where
\begin{align*}
A_k:= \sup_{kh \leq s < (k+1)h} \left(-b(X_{kh}^h)(s-kh) - \sigma(W_s-W_{kh}) \right).
\end{align*}
Though $A_k$ is defined by the  supremum of a stochastic process, it can be simulated by using the following lemma.
\begin{lemma}[\cite{Le93}, Theorem 1]
Let $t \in [0,T]$ and $a,c \in \br$.
Define $S_t:= \sup_{0 \leq s \leq t} (a W_s + cs )$.
Let $U_t$ be a centered Gaussian random variable with variance $t$ and let $V_t$ be an exponential random variable with parameter $1/(2t)$ independent from $U_t$.
Define
\begin{align*}
Y_t:=\frac12 (aU_t + ct + (a^2 V_t + (aU_t+ct)^{2})^{1/2}).
\end{align*}
Then the vectors $(W_t, S_t)$ and $(U_t,Y_t)$ have the same law.
\end{lemma}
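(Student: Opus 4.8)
The plan is to derive this identity from the reflection principle for Brownian motion together with the Cameron--Martin change of measure; the explicit formula for $Y_t$ is engineered precisely so as to reproduce the conditional law of the running supremum of a drifted, rescaled Brownian motion given its terminal value. First I would reduce to a normalized case. Since $-W$ is again a standard Brownian motion and $(-U_t,V_t)\stackrel{\mathrm d}{=}(U_t,V_t)$, the claim for the parameter $a$ is equivalent to the claim for $-a$, so one may assume $a\ge 0$; if $a=0$ both pairs reduce to $\big(N(0,t),\max(0,ct)\big)$, so assume $a>0$. Writing $\nu:=c/a$, one has $S_t=a\,\overline M_t$ with $\overline M_t:=\sup_{0\le s\le t}(W_s+\nu s)$, and $Y_t=a\,\overline Y_t$ with $\overline Y_t:=\tfrac12\big(U_t+\nu t+(V_t+(U_t+\nu t)^2)^{1/2}\big)$, so it suffices to prove $(W_t,\overline M_t)\stackrel{\mathrm d}{=}(U_t,\overline Y_t)$ for each fixed $t\in(0,T]$.

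For the left-hand side I would compute the joint density. By the reflection principle (differentiate $\pr(M_t\ge m,\,W_t\le w)=\pr(W_t\ge 2m-w)$, valid for $m\ge 0$ and $w\le m$), the pair $(W_t,M_t)$ with $M_t:=\sup_{0\le s\le t}W_s$ has density $g(w,m)=\tfrac{2(2m-w)}{\sqrt{2\pi t^3}}\exp\!\big(-(2m-w)^2/(2t)\big)$ on $\{m\ge 0,\ w\le m\}$. Changing the measure by the density $\exp(-\nu W_t-\nu^2t/2)$ turns $(W_s+\nu s)_{s\le t}$ into a standard Brownian motion (Girsanov), and undoing this change of measure shows that $(W_t,\overline M_t)$ has density
$$
p(w,m)=\frac{2(2m-w-\nu t)}{\sqrt{2\pi t^3}}\exp\!\Big(\!-\frac{(2m-w-\nu t)^2}{2t}+\nu w+\frac{\nu^2 t}{2}\Big)
$$
on $\{m\ge 0,\ m\ge w+\nu t\}$.

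For the right-hand side, $U_t\sim N(0,t)$, and conditionally on $U_t=w$ I put $\beta:=w+\nu t$: the map $v\mapsto y=\tfrac12\big(\beta+(v+\beta^2)^{1/2}\big)$ is a strictly increasing bijection of $(0,\infty)$ onto $(\max(0,\beta),\infty)$, with inverse $v=4y(y-\beta)$ and $dv/dy=4(2y-\beta)$; since $V_t$ is exponential with parameter $1/(2t)$, the conditional density of $\overline Y_t$ is therefore $\tfrac{2(2y-\beta)}{t}\exp\!\big(-2y(y-\beta)/t\big)$ on $\{y>\max(0,\beta)\}$. Multiplying by the $N(0,t)$-density of $U_t$ gives the joint density of $(U_t,\overline Y_t)$, and comparing it with $p$ above: the prefactors and the supports coincide at once, while the exponents agree by the algebraic identity
$$
-\frac{(2m-w-\nu t)^2}{2t}+\nu w+\frac{\nu^2 t}{2}=-\frac{w^2}{2t}-\frac{2m(m-w-\nu t)}{t},
$$
obtained by expanding $(2m-w-\nu t)^2$ and $(w+\nu t)^2$. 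Hence $(W_t,\overline M_t)\stackrel{\mathrm d}{=}(U_t,\overline Y_t)$, which is the lemma.

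I expect the only delicate part to be the bookkeeping: getting the direction of the Girsanov shift and the resulting exponential weight right, and reconciling the two descriptions of the support $\{m\ge 0\}\cap\{m\ge w+\nu t\}$ (the constraints $\overline Y_t\ge 0$ and $\overline Y_t\ge U_t+\nu t$ versus $\overline M_t\ge 0$ and $\overline M_t\ge W_t+\nu t$). I would also note that, since $U_t$ and $V_t$ are individual random variables, the assertion is to be read as equality in law of the pairs $(W_t,S_t)$ and $(U_t,Y_t)$ for each fixed $t$ --- equivalently, on each interval $[kh,(k+1)h)$ with $t=h$ --- which is precisely what is needed in order to simulate $A_k$; we refer to \cite{Le93} for the statement phrased as an equality of laws of processes.
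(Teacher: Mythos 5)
The paper does not prove this lemma at all: it is quoted verbatim from L\'epingle \cite{Le93} as a known simulation device for $A_k$, so there is no ``paper's proof'' to compare against. Your argument is a correct, self-contained derivation, and it is essentially the classical one: reflection principle for the joint law of $(W_t,\sup_{s\le t}W_s)$, Girsanov tilt $\exp(\nu w+\nu^2 t/2)$ evaluated at $w=b-\nu t$ to pass to the drifted supremum, and then the change of variables $v=4y(y-\beta)$, $dv/dy=4(2y-\beta)$, which converts the exponential density $\tfrac{1}{2t}e^{-v/(2t)}$ into exactly the conditional density $\tfrac{2(2y-\beta)}{t}e^{-2y(y-\beta)/t}$ of the running maximum given the endpoint. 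I checked the three places where such proofs usually go wrong and all are in order: the scaling reduction $S_t=a\overline M_t$, $Y_t=a\overline Y_t$ (including the sign flip via $(-U_t,V_t)\stackrel{\mathrm d}{=}(U_t,V_t)$ and the degenerate case $a=0$, where both sides give $(ct)^+$); the algebraic identity
\begin{align*}
-\frac{(2y-w-\nu t)^2}{2t}+\nu w+\frac{\nu^2 t}{2}=-\frac{w^2}{2t}-\frac{2y(y-w-\nu t)}{t},
\end{align*}
which follows from $-\beta^2/(2t)+\nu w+\nu^2 t/2=-w^2/(2t)$ with $\beta=w+\nu t$; and the matching of the supports $\{m\ge 0,\ m\ge w+\nu t\}$ and $\{y>\max(0,\beta)\}$ up to a null set. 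Your closing caveat is also the right reading: since a single Gaussian $U_t$ and a single exponential $V_t$ are involved, the assertion is an equality in law of the pairs for each fixed $t$ (which is all the paper uses, applying it on each step of length $h$), and the process-level phrasing should be understood in that sense, as in \cite{Le93}.
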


Under the Lipschitz condition for the coefficients of the reflected SDE (\ref{RSDE}), L\'epingle \cite{Le95} shows that
\begin{align*}
\bE[\sup_{0\leq t \leq T}|X_t-X_t^h|^2]^{1/2} \leq Ch^{1/2},
\end{align*}
for some constant $C$.

We obtain the following result on the weak convergence for the Euler-Maruyama scheme for a reflected SDE with non-Lipschitz coefficient.
\begin{theorem} \label{main3}
Let $X$ be a solution of the SDE \eqref{RSDE} and $h>0$.
Suppose that the drift coefficient $b$ is of sub-linear growth  and $\alpha$-H\"older continuous with $\alpha \in (0,1]$.
Moreover, assume that $f:C[0,T] \to \br$ is bounded. 
Then there exists a constant $C$, which does not depend on $h$, such that 
$$| \bE[f(X)] - \bE [f(X^h)] | \leq C h^{\alpha/2}.$$
\end{theorem}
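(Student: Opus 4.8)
The plan is to reduce the reflected problem to the non-reflected one already covered by Theorem \ref{main1} via the Skorohod map, exploiting that $b = b^H$ with $b^H$ being $\alpha$-Hölder continuous, so that $b \in \mathcal{B}(\alpha)$ with $b^A \equiv 0$. First I would observe that the one-dimensional Skorohod map $\Gamma$ of Lemma \ref{Skorohod}, which sends a continuous path $y$ with $y_0 = 0$ (together with the starting point $z \geq 0$) to $x = \Gamma(z,y)$, is Lipschitz continuous in the supremum norm: from the explicit formula $\ell_t = \max\{0, \max_{0\le s\le t}(-z - y_s)\}$ one gets $\|\Gamma(z,y) - \Gamma(z,\tilde y)\|_{\infty,[0,T]} \le 2\|y - \tilde y\|_{\infty,[0,T]}$. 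Hence $X = \Gamma(x_0, Y)$ and $X^h = \Gamma(x_0, Y^h)$ where $Y_t = \int_0^t b(X_s)\,ds + \sigma W_t$ and $Y^h_t = \int_0^t b(X^h_{\eta_h(s)})\,ds + \sigma W_t$ are the ``driving'' (non-reflected) paths. Since $f$ is bounded, a direct reduction of $|\bE[f(X)] - \bE[f(X^h)]|$ is not available through Lipschitz continuity of $f$; instead I would follow the same strategy as in the proof of Theorem \ref{main1} / Theorem \ref{main0}, namely a change of measure (Girsanov) combined with the weak uniqueness hypothesis for \eqref{RSDE}, writing $\bE[f(X^h)]$ as an expectation under the law of the reflected Brownian motion $\hat X = \Gamma(x_0, \sigma W)$ weighted by an exponential martingale involving $b$ evaluated along the Euler path, and similarly for $\bE[f(X)]$.

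The key steps, in order, would be: (1) establish the Lipschitz property of the Skorohod map and derive moment bounds $\sup_h \bE[\sup_{0\le t\le T}|X^h_t|^p] < \infty$ for all $p \ge 1$, using sub-linear growth of $b$ and the contractivity of $\Gamma$ together with Gronwall's inequality — this is where sub-linear (rather than merely linear) growth is used to keep the exponential Girsanov weights integrable; (2) perform the Girsanov transformation removing the drift, so that both $X$ and $X^h$ become functionals of a reflected Brownian motion under equivalent measures, with Radon--Nikodym densities $\mathcal{E}(\int_0^{\cdot} \sigma^{-1} b(\hat X_s)\,dW_s)$ and $\mathcal{E}(\int_0^{\cdot}\sigma^{-1}b(\hat X^h_{\eta_h(s)})\,dW_s)$ respectively; (3) bound the difference of these two densities in $L^2$ (or $L^q$) by controlling $\bE[\int_0^T |b(\hat X_s) - b(\hat X^h_{\eta_h(s)})|^2\,ds]$; (4) split $b(\hat X_s) - b(\hat X^h_{\eta_h(s)})$ as $\big(b(\hat X_s) - b(\hat X^h_s)\big) + \big(b(\hat X^h_s) - b(\hat X^h_{\eta_h(s)})\big)$ and use $\alpha$-Hölder continuity of $b$ to reduce to $\bE[|\hat X_s - \hat X^h_s|^\alpha]$ and $\bE[|\hat X^h_s - \hat X^h_{\eta_h(s)}|^\alpha]$; (5) the second term is $O(h^{\alpha/2})$ because between grid points $X^h$ moves like a Brownian increment plus an $O(h)$ drift contribution plus a local-time increment, and the Skorohod-map Lipschitz bound transfers the $h^{1/2}$ modulus of continuity of Brownian motion to $X^h$; the first term is handled by a Gronwall-type argument on $\bE[\sup_{s\le t}|\hat X_s - \hat X^h_s|^\alpha]$ closing at rate $h^{\alpha/2}$; (6) combine via Cauchy--Schwarz, using boundedness of $f$ and the density estimates, to obtain $|\bE[f(X)] - \bE[f(X^h)]| \le C h^{\alpha/2}$.

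The main obstacle I anticipate is step (5), controlling $\bE[\sup_{s \le t}|\hat X_s - \hat X^h_s|^\alpha]$: because the drift is only $\alpha$-Hölder and not Lipschitz, the naive Gronwall argument gives a self-referential inequality involving $\bE[|\hat X_s - \hat X^h_s|^\alpha]$ on the right, which is a concave function of the quantity being estimated, so one cannot directly apply the classical Gronwall lemma. The standard remedy — and the one I would pursue — is to bound $\bE[|\hat X_s - \hat X^h_s|^\alpha]$ by $\big(\bE[|\hat X_s - \hat X^h_s|]\big)^\alpha$ via Jensen, or better, to estimate $\bE[|\hat X_s - \hat X^h_s|^p]$ for a suitable $p$ and interpolate, but Hölder drift does not give a strong $L^p$ rate in the classical way either; instead one exploits that after Girsanov the process $\hat X$ is a \emph{reflected Brownian motion} whose law is explicitly known, so that $b(\hat X_s) - b(\hat X^h_s)$ is being integrated against a smooth reference measure, and the contribution of the drift discrepancy to the density difference can be estimated directly in terms of $h$, circumventing the pathwise Gronwall step entirely — this is precisely the mechanism used in the proofs of Theorems \ref{main0} and \ref{main1}, and the reflected case should go through mutatis mutandis once the Lipschitz continuity of the Skorohod map is in hand. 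A secondary technical point is verifying the Novikov-type condition for the Girsanov densities, which is where the sub-linear growth assumption on $b$ (rather than merely linear growth as in Theorem \ref{main0}) becomes essential, exactly as in Theorem \ref{main1}.
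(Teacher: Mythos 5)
Your high-level strategy (Girsanov plus the Lipschitz property of the Skorohod map) is the right one, but the core of your argument as written contains a gap that the paper's proof is specifically designed to avoid. In steps (2)--(5) you set up the two Radon--Nikodym densities as functionals of two \emph{different} processes $\hat X$ and $\hat X^h$, and you are then forced to control the term $b(\hat X_s)-b(\hat X^h_s)$, i.e.\ to compare two distinct processes at the same time via $\bE[|\hat X_s-\hat X^h_s|^\alpha]$ and a Gronwall argument. As you yourself observe, this Gronwall step does not close for merely H\"older drift, and your proposed escape (``the law of reflected Brownian motion is explicit, so the drift discrepancy can be estimated directly'') is left entirely unsubstantiated. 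The missing idea is that the change of measure sends \emph{both} $X$ and $X^h$ to the \emph{same} reflected Brownian motion $U_t=x_0+\sigma W_t+L^0_t(U)$: since $(X,\hat B)$ under $\hat{\pr}$ and $(X^h,\hat B^h)$ under $\hat{\pr}^h$ each have the law of $(U,W)$ under $\bQ$, one obtains
\begin{equation*}
\bE[f(X)]-\bE[f(X^h)]=\bE\bigl[f(U)\bigl(\hat Z_T-\hat Z^h_T\bigr)\bigr],
\qquad
\hat Y_t-\hat Y^h_t=\int_0^t\bigl(b(U_s)-b(U_{\eta_h(s)})\bigr)dW_s-\tfrac12\int_0^t\bigl(b^2(U_s)-b^2(U_{\eta_h(s)})\bigr)ds .
\end{equation*}
The only quantity to estimate is therefore the \emph{time increment} $b(U_s)-b(U_{\eta_h(s)})$ of a single process, which by $\alpha$-H\"older continuity reduces to $\bE[|U_s-U_{\eta_h(s)}|^{p\alpha}]\le C h^{p\alpha/2}$; this is exactly where your Skorohod-map Lipschitz bound is used, via $|L^0_t(U)-L^0_s(U)|\le|\sigma|\sup_{s\le u\le t}|W_u-W_s|$. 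No comparison of $\hat X$ with $\hat X^h$ and no Gronwall lemma ever enters; the spurious term $b(\hat X_s)-b(\hat X^h_s)$ in your step (4) is identically zero in the correct formulation. With that correction the rest of your outline (Novikov condition, $L^p$ bounds on the exponential weights, Cauchy--Schwarz with the bounded $f$) matches the paper's proof. A minor additional inaccuracy: sub-linear growth of $b$ is not needed to make the Girsanov weights integrable (linear growth already suffices for that, as in Theorem \ref{main0}); it is needed to get finite $p$-th moments of $\hat Z_T$ and $\hat Z^h_T$ for $p>1$, cf.\ Lemma \ref{lem:powZ} and Remark \ref{rmk:exponent}.
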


\section{Proofs} \label{sec:proofs}
From now on, we will repeatedly use without mentioning the following elementary estimate
\begin{equation} \label{exp_inq}
  \sup_{x \in \br} |x|^p e^{k|x| - x^2}  < \infty, \quad \text{for any } p\geq 0, \ k \in \br .
 \end{equation}
 Throughout this section, a symbol $C$ stands for a positive generic constant independent of the discretization parameter $h$, which nonetheless may depend on time $T$, coefficients $b, \sm$ and payoff function $f$.

\subsection{Change of Measures} \label{subs:change}
Let $X$ be a solution of the SDE \eqref{eqn:defX} defined in a certain probability space $(\Omega, \mf, \pr)$ with a Brownian motion $(W_t, \mf_t)_{t \geq 0}$. The notations $x_0, b, \sigma, \eta_h$ are defined as at the beginning of  Section \ref{subs:sde}.
From now on, we will use the following notations
\begin{align*}
& Z_t = e^{Y_t}, \quad Y_t = \int_0^t (\sm^{-1}b)_j(x_0 + \sm W_s)dW^j_s - \frac{1}{2}\int_0^t |\sm^{-1}b(x_0 + \sm W_s)|^2 ds,\\
& Z^h_t = e^{Y^h_t}, \quad Y^h_t = \int_0^t (\sm^{-1}b)_j(x_0 + \sm W_{\eta_h(s)})dW^j_s - \frac{1}{2}\int_0^t |\sm^{-1} b(x_0 + \sm W_{\eta_h(s)})|^2 ds,
\end{align*}
where we use Einstein's summation convention on repeated indices. We also use the following auxiliary stopping times
$$\tau^W_D = \inf\{t \geq 0: x_0 + \sigma W_t \not \in D\}, \ \text{and} \ \tau^{W,h}_D = \inf\{kh \geq 0: x_0 + \sigma W_{kh} \not \in D, k = 0,1,\ldots\}.$$

\begin{lemma} \label{lem:decom}
Let $X$ be a solution of the SDE \eqref{eqn:defX} and $h>0$.
Suppose that $b$ is a function with at most linear growth, then we have  the following representations
\begin{align}
& \bE[f(X)] - \bE [f(X^h)] = \bE[ f(x_0 + \sm W )(Z_T - Z^h_T)],  \label{eq:error}
\end{align}
and 
\begin{align}
  \bE[g(X_T)&\1_{(\tau_D > T)}] - \bE [g(X^h_T)\1_{(\tau^h_D > T)}] \notag \\
\qquad & = \bE[ g(x_0 + \sm W_T )(Z_T \1_{(\tau^W_D > T)} - Z^h_T\1_{(\tau^{W,h}_D > T)})],  \label{eq:error2}
\end{align}
for all measurable functions $f: C[0,T] \to \br$ and $g: \br^d \to \br$ provided that all the above expectations are integrable.
\end{lemma}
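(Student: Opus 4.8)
The plan is to use the Girsanov theorem to remove the drift. Under the measure $\bQ$, the process $X$ solves \eqref{eqn:defX}. Since $\sigma$ is a constant invertible matrix and $b$ has at most linear growth, $\sigma^{-1}b$ has at most linear growth, so the process $\sigma^{-1}b(X_s)$ is such that Novikov-type or exponential-moment conditions can be checked (using Gaussian tails of $x_0+\sigma W$ together with the linear growth bound \eqref{exp_inq}), and hence the exponential local martingale
$$\widehat{Z}_t = \exp\left( -\int_0^t (\sigma^{-1}b)_j(X_s)\,dW^j_s - \frac12\int_0^t |\sigma^{-1}b(X_s)|^2\,ds \right)$$
is a true martingale on $[0,T]$. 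By Girsanov's theorem, under the measure $d\widetilde{\bQ} = \widehat{Z}_T\,d\bQ$ the process $\widetilde{W}_t := W_t + \int_0^t \sigma^{-1}b(X_s)\,ds$ is a Brownian motion, and $X_t = x_0 + \sigma\widetilde{W}_t$. Therefore, for a measurable functional $f$,
$$\bE^{\bQ}[f(X)] = \bE^{\widetilde{\bQ}}\left[ f(x_0+\sigma\widetilde{W})\, \widehat{Z}_T^{-1}\right].$$
A direct computation, substituting $W = \widetilde{W} - \int_0^\cdot \sigma^{-1}b(X_s)\,ds$ into the exponent of $\widehat{Z}_T^{-1}$ and using $X_s = x_0+\sigma\widetilde{W}_s$, shows $\widehat{Z}_T^{-1} = Z_T$ with $Z_T$ as defined in the paper but with $\widetilde{W}$ in place of $W$; since $\widetilde{W}$ is a $\widetilde{\bQ}$-Brownian motion and the law is all that matters, we obtain $\bE[f(X)] = \bE[f(x_0+\sigma W)Z_T]$, where on the right $W$ is the original Brownian motion under $\bQ$. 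The same argument applied to the Euler--Maruyama process $X^h$, which solves \eqref{eq:eulerX} and is driven by the same $W$, yields $\bE[f(X^h)] = \bE[f(x_0+\sigma W)Z^h_T]$ --- here the key point is that replacing $X^h_{\eta_h(s)}$ requires knowing that $X^h$ can also be written as $x_0+\sigma$ times a Brownian motion under the tilted measure, so that $X^h_{\eta_h(s)} = x_0 + \sigma W^{(h)}_{\eta_h(s)}$ for the appropriate Brownian motion; the frozen argument $\eta_h(s)$ causes no difficulty because the Girsanov density for $X^h$ is itself adapted and of the required exponential form. Subtracting the two identities gives \eqref{eq:error}.

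For \eqref{eq:error2}, I would run the identical change-of-measure argument but now keep track of the barrier. Under the tilted measure, $X_t = x_0 + \sigma\widetilde{W}_t$ pathwise, so the event $\{\tau_D > T\} = \{X_s \in D,\ \forall s\le T\}$ becomes $\{x_0 + \sigma\widetilde{W}_s \in D,\ \forall s \le T\} = \{\tau^W_D > T\}$ (with $\widetilde{W}$ playing the role of $W$); similarly, for the discrete process, $\{\tau^h_D > T\} = \{X^h_{kh} \in D,\ \forall kh \le T\}$ becomes $\{x_0 + \sigma\widetilde{W}_{kh} \in D\ \forall kh \le T\} = \{\tau^{W,h}_D > T\}$. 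Hence
$$\bE[g(X_T)\1_{(\tau_D>T)}] = \bE[g(x_0+\sigma W_T)\,Z_T\,\1_{(\tau^W_D>T)}], \qquad \bE[g(X^h_T)\1_{(\tau^h_D>T)}] = \bE[g(x_0+\sigma W_T)\,Z^h_T\,\1_{(\tau^{W,h}_D>T)}],$$
and subtracting yields \eqref{eq:error2}. Note that $g$ being merely measurable is harmless: all expressions only involve $X_T$ (resp.\ $X^h_T$) and the barrier events, which are measurable functionals of the paths.

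The main obstacle I anticipate is the justification of the martingale property of the Girsanov densities $\widehat{Z}$ (and its analogue for $X^h$) on $[0,T]$ --- i.e., verifying that these exponential local martingales are genuine martingales so that Girsanov applies and the tilted measure is a probability measure. For $X^h$ this is straightforward because $b(X^h_{\eta_h(s)})$ is, on each subinterval, a function of finitely many Gaussian increments, so one can check the exponential integrability step by step; but one must be careful that the constant in the exponential bound does not blow up as the number of steps grows. For $X$ with only linear-growth (not bounded) drift, one typically localizes by stopping times and passes to the limit, using a Gronwall/Fernique-type argument to control $\bE[\exp(\lambda \sup_{s\le T}|X_s|^2)]$ for small $\lambda$; alternatively one invokes the standing assumption that \eqref{eqn:defX} has a weak solution unique in law and constructs it directly via Girsanov from $x_0+\sigma W$, which makes the identity \eqref{eq:error} almost tautological. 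The integrability caveat ``provided that all the above expectations are integrable'' in the statement is precisely what absorbs the remaining technical checks, so in the proof I would simply verify the exponential-moment bounds needed to apply Girsanov and to make the right-hand sides finite, citing \eqref{exp_inq} and the linear-growth hypothesis.
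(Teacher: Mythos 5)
Your proposal is correct and follows essentially the same route as the paper: a Girsanov change of measure built from the exponential density $\exp(-\int_0^T(\sigma^{-1}b)_j(X_s)\,dW^j_s-\tfrac12\int_0^T|\sigma^{-1}b(X_s)|^2ds)$ (and its Euler analogue), under which $X$ (resp.\ $X^h$) becomes $x_0+\sigma$ times a Brownian motion, with the martingale property justified via linear-growth bounds on $X$ and $X^h$ in terms of $\sup_{s\le T}|W_s|$ (the paper uses Gronwall and a step-by-step iteration, then cites Corollary 3.5.16 of Karatzas--Shreve — exactly the obstacle you flag and the resolution you sketch). The identification of the barrier events under the tilted measure is likewise the paper's argument for \eqref{eq:error2}.
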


\begin{proof}
Let $\sm^{-1}$ be the inverse matrix of $\sm$. 
Since $b$ is of linear growth, so is $\sm^{-1}b$. Thus, there exist constants $c_1, c_2 > 0$ such that $|b(x)| < c_1|x|+ c_2$ for any $x \in \br^d$. For any $0 \leq  t \leq  t_0 \leq T $, 
\begin{align*}
|X_t| &\leq |x_0| + |\sm W_t| + \int_0^t |b(X_s)| ds\\
&\leq |x_0| + |\sm|\sup_{0\leq s \leq t_0}|W_s| + c_2 t_0 + c_1\int_0^t |X_s| ds.
\end{align*}
Applying Gronwall's inequality for $t \in [0, t_0]$, one obtains
\begin{align} \label{eq:Xbound}
|X_{t_0}| &\leq  (|x_0| + |\sm|\sup_{0\leq s \leq t_0}|W_s| + c_2t_0)e^{c_1 t_0} \notag \\ 
&\leq  (|x_0| + c_2T)e^{c_1 T} + |\sm|e^{c_1T} \sup_{0\leq s \leq t_0}|W_s|. 
\end{align}
On the other hand, for each  integer $k \geq 1$, one has
\begin{align*}
|X^h_{kh}| &\leq |X^h_{(k-1)h}| + h |b(X^h_{(k-1)h})| + 2|\sm|\sup_{0 \leq t \leq kh} |W_t|\\
&\leq (1+ hc_1) |X^h_{(k-1)h}| + hc_2 + 2|\sm|\sup_{0 \leq t \leq kh} |W_t|.
\end{align*}
Hence, a simple iteration yields that 
$$|X^h_{kh}| \leq (1+hc_1)^k|x_0| + (hc_2 + 2|\sm|\sup_{0 \leq t \leq kh} |W_t|)\frac{(1+hc_1)^{k -1}-1}{hc_1}.$$ 
Thus, for any $t \in (0,T]$,
$$|X^h_{\eta_h(t)}| \leq (1+ hc_1)^{T/h}|x_0| + \frac{c_2(1+hc_1)^{T/h}}{c_1}+ 2|\sm| \frac{(1+hc_1)^{T/h}}{hc_1}\sup_{0 \leq s \leq \eta_h(t)}|W_s|.$$ 
Moreover, 
$$|X^h_t - X^h_{\eta_h(t)}| \leq c_1 h |X^h_{\eta_{h}(t)}| + c_2h + 2|\sigma| \sup_{0\leq s\leq t} |W_t|.$$
Therefore, for any $t \in (0,T]$, we have
\begin{equation} \label{eq:Xhbound}
|X^h_t| \leq  (1+c_1h)^{1+T/h}\frac{c_1|x_0|+ c_2}{c_1} + c_2h + \frac{2|\sm|(1+hc_1)^{1+T/h}+ 2hc_1}{hc_1} \sup_{0 \leq s \leq t}|W_s|.
\end{equation}
We define new measures $\bQ$ and $\bQ^h$ as 
\begin{align*}
&\frac{d \bQ}{d \pr} =  \exp \Big( -\int_0^T (\sm^{-1} b)_j(X_s) dW^j_s - \frac{1}{2} \int_0^T |\sm^{-1}b(X_s)|^2 ds\Big),\\
&\frac{d \bQ^h}{d \pr} =  \exp \Big( -\int_0^T (\sm^{-1} b)_j(X_{\eta_h(s)}^h) dW^j_s - \frac{1}{2} \int_0^T |\sm^{-1}b(X_{\eta_h(s)}^h)|^2 ds\Big).
\end{align*}
It follows from Corollary 3.5.16 in \cite{KS} together with  estimates (\ref{eq:Xbound}) and (\ref{eq:Xhbound}) that $\bQ$ and $\bQ^h$ are probability measures. Furthermore, it follows from Girsanov theorem that processes $B = \{(B^1_t, \ldots, B^d_t), \ 0 \leq t \leq T\}$ and $B^h = \{(B^{h,1}_t, \ldots, B^{h,d}_t), \ 0 \leq t \leq T\}$ 
defined by
$$B^j_t = W^j_t + \int_0^t (\sm^{-1}b)_j(X_s)ds, \ B^{h,j}_t = W^j_t + \int_0^t (\sm^{-1}b)_j(X_{\eta_h(s)})ds, \ 1 \leq j \leq d, 0\leq t \leq T,$$ 
are $d$-dimensional Brownian motions with respect to $\bQ$ and $\bQ^h$, respectively.  Note that $X_s = x_0 + \sigma B_s$ and $X^h_s = x_0 + \sigma B^h_s$. Therefore, 
\begin{align*}
\bE[f(X)] &= \bE_\bQ \Big[f(X)\frac{d\pr}{d\bQ}\Big] \\
&= \bE_\bQ \Big[ f(x_0 + \sigma B) \exp \Big( \int_0^T (\sigma^{-1}b)_j(x_0+\sigma B_s)dB^j_s - \frac12 \int_0^T |\sigma^{-1}b(x_0+\sigma B_s)|^2ds\Big)\Big]\\
&= \bE [f(x_0+\sigma W)Z_T].
\end{align*}
Repeating the previous argument leads to $\bE[f(X^h)] = \bE [f(x_0+\sigma W)Z^h_T],$ which implies  \eqref{eq:error}. The proof of   \eqref{eq:error2} is similar and will be omitted. 
\end{proof}

From now on, we will use the representation formulas in Lemma \ref{lem:decom} to analyze the weak rate of convergence. We need the following estimates on the moments of $Z$ and $Z^h$.
\begin{lemma} \label{lem:powZ}
Suppose that $b$ is of sub-linear growth. Then for any $p>0$, 
$$\bE[|Z_T|^p + |Z^h_T|^p] \leq C < \infty,$$
for some constant $C$ which is not depend on $h$.
\end{lemma}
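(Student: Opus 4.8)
The plan is to control the exponential moments of $Z_T=e^{Y_T}$ and $Z_T^h=e^{Y_T^h}$ by exploiting that the stochastic integrals appearing in $Y_T$ and $Y_T^h$ are, conditionally, Gaussian. Fix $p>0$. The key observation is that $|Z_T|^p = e^{pY_T}$ where $pY_T = \int_0^T p(\sm^{-1}b)_j(x_0+\sm W_s)\,dW_s^j - \frac{p}{2}\int_0^T |\sm^{-1}b(x_0+\sm W_s)|^2\,ds$. Completing the square, I would write $e^{pY_T} = M_T^{(p)}\cdot e^{\frac{p^2-p}{2}\int_0^T|\sm^{-1}b(x_0+\sm W_s)|^2\,ds}$, where $M_T^{(p)}=\exp\big(\int_0^T p(\sm^{-1}b)_j(x_0+\sm W_s)\,dW_s^j - \frac{p^2}{2}\int_0^T|\sm^{-1}b(x_0+\sm W_s)|^2\,ds\big)$. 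Since $b$ is of sub-linear growth, $\sm^{-1}b$ is too, hence $|\sm^{-1}b(y)|^2 = o(|y|^2)$ as $|y|\to\infty$, so for every $\lambda>0$ there is a constant $K_\lambda$ with $|\sm^{-1}b(y)|^2 \le \lambda|y|^2 + K_\lambda$ for all $y$. This is the structural fact I would lean on.

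The execution for $Z_T$ then proceeds as follows. First I apply Cauchy–Schwarz: $\bE[e^{pY_T}] \le \bE[(M_T^{(p)})^2]^{1/2}\,\bE\big[e^{(p^2-p)\int_0^T|\sm^{-1}b(x_0+\sm W_s)|^2\,ds}\big]^{1/2}$. The term $\bE[(M_T^{(p)})^2]$ equals $\bE[M_T^{(2p)}\, e^{p^2\int_0^T|\sm^{-1}b|^2ds}]$ after another completion of the square, and more simply one can iterate Cauchy–Schwarz finitely many times to reduce everything to bounding quantities of the form $\bE\big[\exp\big(c\int_0^T |\sm^{-1}b(x_0+\sm W_s)|^2\,ds\big)\big]$ for a constant $c>0$ (the martingale-type factors $M^{(q)}$ have expectation $\le 1$ once one knows Novikov's condition holds, which itself follows from the same exponential-integrability estimate; alternatively one truncates $b$, argues for the truncation, and passes to the limit by Fatou). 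Then I use the sub-linear growth bound $|\sm^{-1}b(x_0+\sm W_s)|^2 \le \lambda |x_0+\sm W_s|^2 + K_\lambda \le 2\lambda|x_0|^2 + 2\lambda|\sm|^2\sup_{0\le s\le T}|W_s|^2 + K_\lambda$, so that
\begin{align*}
\bE\Big[\exp\Big(c\int_0^T |\sm^{-1}b(x_0+\sm W_s)|^2\,ds\Big)\Big] \le e^{cT(2\lambda|x_0|^2+K_\lambda)}\,\bE\Big[\exp\Big(2c\lambda T|\sm|^2 \sup_{0\le s\le T}|W_s|^2\Big)\Big].
\end{align*}
Choosing $\lambda$ small enough that $2c\lambda T|\sm|^2$ is below the threshold for which $\bE[\exp(\theta\sup_{s\le T}|W_s|^2)]<\infty$ (finite for $\theta < 1/(2T)$, by the reflection principle and a standard Gaussian tail computation), the right-hand side is a finite constant independent of $h$. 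This proves $\bE[|Z_T|^p]\le C$.

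For $Z_T^h$ the argument is identical in structure, with $x_0+\sm W_s$ replaced by $x_0+\sm W_{\eta_h(s)}$; the crucial point is that $\sup_{0\le s\le T}|W_{\eta_h(s)}| \le \sup_{0\le s\le T}|W_s|$, so the same Gaussian supremum bound applies with no $h$-dependence, and the stochastic-integral factor $M_T^{h,(q)}=\exp(\int_0^T q(\sm^{-1}b)_j(x_0+\sm W_{\eta_h(s)})\,dW_s^j - \frac{q^2}{2}\int_0^T |\sm^{-1}b(x_0+\sm W_{\eta_h(s)})|^2 ds)$ is again a genuine martingale (Novikov via the same estimate) with expectation $\le 1$. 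I expect the main obstacle to be the bookkeeping around whether $M^{(q)}$ and $M^{h,(q)}$ are true martingales rather than merely supermartingales: the cleanest route is to first prove the bound for the bounded truncations $b_M = b\,\1_{|b|\le M}$, where all exponential integrals are trivially finite and Novikov is immediate, obtain a constant depending only on the sub-linear growth bound (hence uniform in $M$), and then let $M\to\infty$ using Fatou's lemma on the left and the uniform bound on the right. Everything else is the elementary Gaussian estimate \eqref{exp_inq} and Cauchy–Schwarz.
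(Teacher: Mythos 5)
Your proposal is correct and follows essentially the same route as the paper: complete the square in the exponent, apply Cauchy--Schwarz to separate an exponential (super)martingale factor from $\exp\big(c\int_0^T|\sm^{-1}b(x_0+\sm W_s)|^2ds\big)$, and control the latter via the sub-linear growth bound $|\sm^{-1}b(x_0+\sm y)|^2\le\delta|y|^2+M$ with $\delta$ chosen small enough that the Gaussian exponential moment of $\sup_{s\le T}|W_s|^2$ is finite. The only cosmetic difference is that you justify $\bE[M_T^{(q)}]\le 1$ via Novikov or truncation plus Fatou, whereas the paper invokes Corollary 3.5.16 of Karatzas--Shreve for drifts of linear growth; either suffices (indeed the supermartingale property alone gives the needed inequality).
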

\begin{proof}
It suffices to proof the statement for $p \geq 1$.
Using H\"older's inequality, we have
\begin{align*}
\bE [e^{pY_T}] &= \bE\Big[\exp\Big( p\int_0^T (\sm^{-1}b)_j(x_0+\sm W_s) dW^j_s - \frac{p}{2} \int_0^T |\sm^{-1}b(x+\sm W_s)|^2ds \Big) \Big] \\
&= \bE\Big[ \exp \Big( p\int_0^T (\sm^{-1}b)_j(x_0+\sm W_s) dW^j_s -  p^2\int_{t_{n-1}}^{t_n} |\sm^{-1}b(x_0+\sm W_s)|^2ds +\\
&\qquad + ( p^2 - \frac{p}{2})\int_0^T |\sm^{-1}b(x_0+\sm W_s)|^2ds \Big)\Big]\\
&\leq  \Big\{ \bE \Big[  \exp \Big( 2 p\int_0^T (\sm^{-1}b)_j(x_0+\sm W_s) dW^j_s - 2 p^2\int_0^T |\sm^{-1} b(x_0+\sm W_s)|^2ds \Big) \Big] \Big\}^{1/2} \\
&\qquad  \times \Big\{ \bE \Big[ \exp \Big( (2p^2-p) \int_0^T |\sm^{-1} b(x_0+\sm W_s)|^2 ds \Big) \Big]\Big\}^{1/2}.
\end{align*}
Since $b$ is of linear growth, so is $\sm^{-1}b$ and it follows from Corollary 3.5.16 in \cite{KS} that 
\begin{equation} \label{eq:nov}
\bE \Big[  \exp \Big( 2 p\int_0^T (\sm^{-1} b)_j(x_0+\sm W_s) dW^j_s - 2 p^2\int_0^T  |\sm^{-1}b(x_0+\sm W_s)|^2ds \Big) \Big]  = 1.
\end{equation}
On the other hand, since $b$ is bounded on compact sets and $b(y) = o(|y|)$ as $y \to \infty$, for any $\delta>0$ sufficiently small, there exists a constant $M>0$ such that $|\sm^{-1} b(x_0+\sm y)|^2 \leq \delta |y|^2 + M$ for any $y \in \br^d$. Thus,
\begin{align*}
\int_0^T |\sm^{-1}b(x_0+\sm W_s)|^2 ds &\leq \int_0^T (\delta |W_s|^2 + M) ds \leq TM + T\delta \sup_{s \leq T}|W_s|^2 \\
& \leq TM + T\delta\sum_{j=1}^d ( (\sup_{s\leq T} W^j_s)^2 + (\inf_{s \leq T} W^j_s)^2).
\end{align*}
Hence, 
\begin{align*}
&\bE \Big[ \exp \Big( (2p^2-p) \int_0^T |\sm^{-1} b(x_0+\sm W_s)|^2 ds \Big) \Big] \\
&\leq e^{(2p^2-p)MT} \bE\Big[ \exp \Big( T\delta(2p^2-p) \sum_{j=1}^d( (\sup_{s\leq T} W^j_s)^2 + (\inf_{s \leq T} W^j_s)^2)\Big) \Big]\\
&\leq  e^{(2p^2-p)MT} \Big(\bE\Big[ \exp \Big( 2T\delta(2p^2-p) |W^1_T|^2\Big) \Big]\Big)^{d/2},
\end{align*}
where the last inequality follows from H\"older's inequality and  the fact that $$\sup_{s\leq T}W^j_s \overset{law}{=}-\inf_{s\leq T}W^j_s \overset{law}{=} |W^1_T|.$$ 
Because $\bE\Big[ \exp \Big( 2T\delta(2p^2-p)W_T^2\Big) \Big] < \infty$ if one chooses $\delta < (4T^2(2p^2-p))^{-1}$, we obtain $\bE[|Z_T|^p] < \infty$.
Furthermore, since equation (\ref{eq:nov}) still holds if one replaces $b(x_0+\sm W_s)$ with $b(x_0+\sm W_{\eta_h(s)})$, a similar argument yields $\bE[|Z^h_T|^p] < \infty$.
\end{proof}

\begin{remark} \label{rmk:exponent}
In general, the conclusion of Lemma \ref{lem:powZ} is no longer correct if we only suppose that $b$ is of linear growth or even Lipschitz. 

Indeed, consider the particular case that $d=1,\sigma = 1$ and  $b(x) = x $, which is a Lipschitz function. It follows from H\"older's inequality that
\begin{align*}
&\bE \Big[ \exp\Big( \frac{p}{2}\int_0^T W_s^2 ds\Big) \Big] \bE\Big[\exp\Big( p\int_0^T W_s dW_s - \frac{p}{2} \int_0^T W_s^2ds \Big) \Big]\\
 &= e^{-pT/2}\bE \Big[ \exp\Big( \frac{p}{2}\int_0^T W_s^2 ds\Big) \Big]\bE \Big[ \exp \Big( \frac{p}{2}W_T^2 - \frac{p}{2}\int_0^T W_s^2 ds \Big) \Big]\\
&\geq e^{-pT/2}\Big( \bE\big[ e^{pW_T^2/4}\big]\Big)^2.
\end{align*}
 Furthermore, for any $p, T>0$ such that $pT\geq 2$ and $pT^2 < 1/2$, we have $\bE\big[ e^{pW_T^2/4}\big] = \infty$, whereas 
 \begin{align*}
 \bE \Big[ \exp\Big( \frac{p}{2}\int_0^T W_s^2 ds\Big) \Big] &\leq \bE \Big[ \exp\Big(\frac{pT}{2}\sup_{s\leq T}|W_s|^2\Big) \Big]\\
 & \leq \bE \Big[ \exp\Big(\frac{pT}{2}(\sup_{s\leq T} W_s)^2 + \frac{pT}{2}(\inf_{s \leq T} W_s)^2)\Big) \Big]\\
& \leq \Big( \bE \big[ e^{pT|W_T|^2} \big]\Big)^2< \infty.
 \end{align*}
 Therefore, 
$$ \bE\Big[\exp\Big( p\int_0^T W_s dW_s - \frac{p}{2} \int_0^T W_s^2ds \Big) \Big]=\infty, \quad \text{ if \ \  } pT\geq 2, \ pT^2 < \frac{1}{2}.$$
\end{remark}

\subsection{Some auxiliary estimates} \label{subs:auxi}
We recall that $\mathcal{A}$ is a class of functions satisfying \eqref{repX} and $W$ is a $d$-dimensional Brownian motion.
The following result plays a crucial role in our argument.
\begin{lemma} \label{lem:auxi}
For any $\zeta \in \mathcal{A}$, any $p\geq 1, \ t > s > 0$,
\begin{equation}\label{ulg1}
\bE [|\zeta(W_t) - \zeta(W_s)|^p] \leq C_p \frac{\sqrt{t-s}}{\sqrt{s}},
\end{equation}
for some constant $C_p$ not depending on neither $t$ nor $s$. On the other hand, if $\zeta$ is $\alpha$-H\"older continuous then 
\begin{equation}\label{ulg2}
\bE [|\zeta(W_t) - \zeta(W_s)|^p]  \leq C_p (t-s)^{p/2}.
\end{equation}

\end{lemma}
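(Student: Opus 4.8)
I propose to prove the two bounds separately. The second one is immediate: if $\zeta$ is $\alpha$-H\"older continuous then $|\zeta(W_t)-\zeta(W_s)|\le C|W_t-W_s|^{\alpha}$, so $\bE[|\zeta(W_t)-\zeta(W_s)|^p]\le C\,\bE[|W_t-W_s|^{\alpha p}]=C_p(t-s)^{\alpha p/2}$ by the explicit moments of the centred Gaussian increment $W_t-W_s\sim N(0,(t-s)I_d)$. For the first bound, fix $\zeta\in\mathcal{A}$ with an approximation sequence $(\zeta_N)\subset C^1(\br^d)$ as in the definition. I will prove the inequality for each $\zeta_N$ with a constant independent of $N$, and then let $N\to\infty$: since $\zeta_N\to\zeta$ in $L^1_{loc}$, a subsequence converges Lebesgue-a.e.; because $W_s$ and $W_t$ have densities with respect to Lebesgue measure for $s,t>0$, this yields a.s.\ convergence of $\zeta_N(W_s)$ and $\zeta_N(W_t)$ along that subsequence, and the uniform exponential bound $\mathcal{A}(ii)$ upgrades it to convergence in every $L^q$, so the $N$-uniform bound passes to the limit.

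The heart of the matter is the following reformulation of $\mathcal{A}(iii)$: for $k\in\{0,1\}$, $\mu\ge 0$ and $r\in(0,T]$,
\[
\bE\big[\,|\nabla\zeta_N(W_r)|\,|W_r|^{k}\,e^{\mu|W_r|}\,\big]\;\le\;C\,e^{\mu^2 r}\,r^{(k-1)/2},
\]
with $C$ depending only on $d$ and the constant $K$ of $\mathcal{A}$. Indeed, writing the expectation as a Gaussian integral over $y\in\br^d$ and using $\mu|y|-\tfrac{|y|^2}{2r}\le\mu^2 r-\tfrac{|y|^2}{4r}$ together with $|y|^{k}e^{-|y|^2/(8r)}\le C r^{k/2}$, the estimate reduces to controlling $\int_{\br^d}|\nabla\zeta_N(y)|e^{-|y|^2/(8r)}\,dy$, which is exactly the quantity bounded by $\mathcal{A}(iii)$ with parameter $u=8r$ and shift $a=0$. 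When $r\le T$ and $\mu^2 r$ stays bounded, the right-hand side is $\le C r^{(k-1)/2}$.

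Now, by $\mathcal{A}(ii)$ (and $(a+b)^{p-1}\le C_p(a^{p-1}+b^{p-1})$), one has $|\zeta_N(W_t)-\zeta_N(W_s)|^{p-1}\le C_p\,e^{\lambda(|W_t|+|W_s|)}$ with $\lambda=K(p-1)$, so it suffices to bound $\bE[\,|\zeta_N(W_t)-\zeta_N(W_s)|\,e^{\lambda(|W_t|+|W_s|)}\,]$. Write the increment by the fundamental theorem of calculus along the segment from $W_s$ to $W_t$: with $\delta=t-s$ and $Y_u:=(1-u)W_s+uW_t\sim N(0,v(u)I_d)$, $v(u)=s+u^2\delta$,
\[
\zeta_N(W_t)-\zeta_N(W_s)=\int_0^1\nabla\zeta_N(Y_u)\cdot(W_t-W_s)\,du .
\]
For fixed $u$, condition on $Y_u$: the conditional laws of $W_s$, $W_t$, $W_t-W_s$ given $Y_u=y$ are Gaussian with means $\tfrac{s}{v(u)}y,\ \tfrac{s+u\delta}{v(u)}y,\ \tfrac{u\delta}{v(u)}y$ and conditional variances bounded by $\delta\le T$, so a Cauchy--Schwarz (or H\"older) step gives
\[
\bE\big[\,|W_t-W_s|\,e^{\lambda(|W_t|+|W_s|)}\mid Y_u=y\,\big]\le C\Big(\tfrac{u\delta}{v(u)}\,|y|+\sqrt{\tfrac{\delta s}{v(u)}}\Big)e^{\mu(u)|y|},\qquad \mu(u)=\lambda\Big(1+\tfrac{s+u\delta}{v(u)}\Big).
\]
The key elementary estimate $\tfrac{(s+u\delta)^2}{s+u^2\delta}\le 4T$ shows $\mu(u)^2 v(u)\le C\lambda^2 T$ uniformly in $u$, so the reformulated $\mathcal{A}(iii)$ applies with $r=v(u)$ to each of the two resulting terms and yields $\bE[\,|\nabla\zeta_N(Y_u)|\,|W_t-W_s|\,e^{\lambda(|W_t|+|W_s|)}\,]\le C\big(\tfrac{u\delta}{v(u)}+\tfrac{\sqrt{\delta s}}{v(u)}\big)$. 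Integrating over $u\in[0,1]$ and using $\int_0^1\tfrac{u\delta}{v(u)}\,du=\tfrac12\log\tfrac{t}{s}\le\tfrac12\sqrt{\delta/s}$ (from $\log(1+x)\le\sqrt x$) and $\int_0^1\tfrac{\sqrt{\delta s}}{v(u)}\,du=\arctan\sqrt{\delta/s}\le\sqrt{\delta/s}$ gives the claimed bound $C_p\sqrt{t-s}/\sqrt s$, and letting $N\to\infty$ finishes the proof.

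The main obstacle is exactly the passage from the first to the $p$-th moment. Hypothesis $\mathcal{A}(iii)$ controls only a \emph{single} power of $|\nabla\zeta_N|$ against a Gaussian-type kernel --- which is all one can ask when $\zeta_N$ approximates an indicator, whose gradient concentrates on a thin shell --- so the surplus $p-1$ factors must be absorbed through the pointwise bound $\mathcal{A}(ii)$, and this forces exponential weights to be dragged through the conditioning on $Y_u$. The delicate point is that the conditional mean of $W_t$ given $Y_u$ produces a weight $e^{\mu(u)|Y_u|}$ whose coefficient $\mu(u)$ blows up pointwise as $s\to0$; one must check this is harmless, because the combination $\mu(u)^2 v(u)$ that governs the interaction of the weight with the law $N(0,v(u)I_d)$ of $Y_u$ remains bounded by a constant multiple of $T$.
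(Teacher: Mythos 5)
Your proof is correct, and its skeleton coincides with the paper's: approximate $\zeta$ by the sequence $(\zeta_N)$ and pass to the limit via $L^1_{loc}$-convergence together with the uniform exponential bound $\mathcal{A}(ii)$; use $\mathcal{A}(ii)$ again to reduce the $p$-th moment to a first moment carrying an exponential weight; and write the increment by the fundamental theorem of calculus along the segment joining $W_s$ to $W_t$, so that $\mathcal{A}(iii)$ can absorb the single remaining power of $|\nabla\zeta_N|$ --- your closing remark about why only one power of the gradient is controllable is exactly the point of the lemma. Where you genuinely diverge is in how that gradient term is estimated. The paper keeps the joint law of $(W_s,\,W_t-W_s)$ as a product of two Gaussian densities, absorbs the increment factor and the exponential weights into the Gaussians by doubling the variances, and then applies $\mathcal{A}(iii)$ in its \emph{shifted} form (shift $a=\theta y$) to the integral of the gradient over the $W_s$-variable; the leftover normalisation $u^{-1/2}$ with $u\asymp s$ yields $s^{-1/2}$ at once and the $\theta$-integral is trivial. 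You instead condition on the segment point $Y_u\sim N(0,v(u)I_d)$, which lets you invoke $\mathcal{A}(iii)$ only with $a=0$ (packaged as a clean, reusable moment bound for $|\nabla\zeta_N(W_r)|$ against polynomial and exponential weights), but obliges you to compute the conditional means and variances of $W_s$, $W_t$, $W_t-W_s$ given $Y_u$, to check that $\mu(u)^2v(u)$ stays bounded, and to evaluate the $\log(t/s)$ and $\arctan$ integrals at the end; I have verified these steps ($v(u)=s+u^2(t-s)$, all conditional variances at most $t-s$, $(s+u\delta)^2\le 2t\,v(u)$, $\log(1+x)\le\sqrt{x}$) and they are sound, giving the same bound $C_p\sqrt{t-s}/\sqrt{s}$. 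The paper's route is shorter precisely because the shift parameter $a$ in $\mathcal{A}(iii)$ was designed to make the conditioning unnecessary. One last remark: for \eqref{ulg2} you prove the bound $C_p(t-s)^{\alpha p/2}$ rather than the stated $C_p(t-s)^{p/2}$; your exponent is the correct one (the statement contains a typo --- it is $(t-s)^{\alpha p/2}$, with $p=2$, that is actually used in the proof of Lemma \ref{dai5} to produce the term $|s-\eta_h(s)|^{\alpha}$).
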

\begin{proof}
If $\zeta \in \mathcal{A}$, let 
$(\zeta_N)_{N \in \bN}$ be an approximating sequence of $\zeta$ in $\mathcal{A}$. Since $\zeta_N \to \zeta$ in $L^1_{loc}(\br^d)$ and $\zeta$ and $\zeta_N$ are uniformly exponential bounded, we have
\begin{equation} \label{eq:g1}
\bE [|\zeta(W_t) - \zeta(W_s)|^p)] = \lim_{N \to \infty} \bE [|\zeta_N(W_t) - \zeta_N(W_s)|^p].
\end{equation}
Next, we will show that
\begin{equation} \label{eq:g2}
\sup_{N \in \bN} \bE [|\zeta_N(W_t) - \zeta_N(W_s)|^p]   \leq C \frac{\sqrt{t-s}}{\sqrt{s}}. 
\end{equation}
Indeed,  we write
\begin{align*}
&\bE [|\zeta_N(W_t) - \zeta_N(W_s)|^p] \\
 =& \int_{\br^d}\int_{\br^d} |\zeta_N(x+y) - \zeta_N(x)|^p \frac{e^{-|x|^2/2s}}{(2\pi s)^{d/2}}  \frac{e^{-|y|^2/2(t-s)}}{(2\pi (t-s))^{d/2}}  dxdy\\
\leq& C\int_{\br^d}\int_{\br^d}|\zeta_N(x+y) - \zeta_N(x)| (e^{K(|x+y|}+ e^{K|y|})^{p-1} \frac{e^{-|x|^2/2s}}{(2\pi s)^{d/2}}  \frac{e^{-|y|^2/2(t-s)}}{(2\pi (t-s))^{d/2}}  dxdy\\
\leq & C  \sum_{i=1}^d  \int_{\br^d}\int_{\br^d} \int_0^1  \Big| y_i \frac{\partial \zeta_N(x+\theta y)}{\partial x_i}\Big| e^{K(p-1)(|x|+2|y|)}\frac{e^{-|x|^2/2s}}{(2\pi s)^{d/2}}  \frac{e^{-|y|^2/2(t-s)}}{(2\pi (t-s))^{d/2}}  d\theta dxdy\\ 
\leq & C\sqrt{t-s}   \sum_{i=1}^d \int_{\br^d}\int_{\br^d} \int_0^1 \Big| \frac{\partial \zeta_N(x+\theta y)}{\partial x_i}\Big| \frac{e^{-|x|^2/4s}}{(2\pi s)^{d/2}}  \frac{e^{-|y|^2/4(t-s)}}{(2\pi (t-s))^{d/2}}  d\theta dxdy.
\end{align*}
It follows from $\mathcal{A}(iii)$ that 
\begin{align*}
\sup_N \sum_{i=1}^d \int_{\br^d} \Big| \frac{\partial \zeta_N(x+\theta y)}{\partial x_i}\Big|
\frac{e^{-|x|^2/4 s }}{(2\pi s )^{(d-1)/2}}  dx \leq C e^{K|\theta y |},
\end{align*}
thus
\begin{align*}
\bE [|\zeta_N(W_t) - \zeta_N(W_s)|^p]  &\leq  C\frac{\sqrt{t-s}}{\sqrt{s}}   \int_{\br^d} \int_0^1 Ce^{K|\theta y|}  \frac{e^{-|y|^2/4(t-s)}}{(2\pi (t-s) )^{d/2}}   d\theta dy \\
&\leq C\frac{\sqrt{t-s}}{\sqrt{s}}.
\end{align*}
From (\ref{eq:g1}) and (\ref{eq:g2}) we get \eqref{ulg1}. The proof of \eqref{ulg2} is straightforward.
\end{proof}

\begin{lemma}\label{lem:auxi2}
	Suppose $\zeta_A\in \mathcal{A}$ and $\zeta_H:\br^d \to \br$ is $\alpha$-H\"older continuous with $\alpha \in (0,1]$.
	Let $M$ be a non-negative constant.
	Then there exists $C>0$ such that for any $0<t_1<t_2<t_3<t_4 \leq T$,
	\begin{align}
	\bE \left[|\zeta_A(W_{t_2}) - \zeta_A(W_{t_1})||\zeta_A(W_{t_4}) - \zeta_A(W_{t_3})| \sum_{i=1}^4e^{M|W_{t_i}|}\right]
		&\leq \frac{C\sqrt{t_4-t_3} \sqrt{t_2-t_1}}{\sqrt{t_3-t_2}\sqrt{t_1}}, \label{ulg21}\\
	\bE \left[|\zeta_A(W_{t_2}) - \zeta_A(W_{t_1})||\zeta_H(W_{t_4}) - \zeta_H(W_{t_3})| \sum_{i=1}^4e^{M|W_{t_i}|}\right]
		&\leq \frac{C(t_4-t_3)^{\alpha/2} \sqrt{t_2-t_1}}{\sqrt{t_1}}, \label{ulg22}\\
	\bE \left[|\zeta_H(W_{t_2}) - \zeta_H(W_{t_1})||\zeta_A(W_{t_4}) - \zeta_A(W_{t_3})| \sum_{i=1}^4e^{M|W_{t_i}|}\right]
		&\leq \frac{C\sqrt{t_4-t_3} (t_2-t_1)^{\alpha/2} }{\sqrt{t_3-t_2}}, \label{ulg23}\\
	\bE \left[|\zeta_H(W_{t_2}) - \zeta_H(W_{t_1})||\zeta_H(W_{t_4}) - \zeta_H(W_{t_3})| \sum_{i=1}^4e^{M|W_{t_i}|}\right]
		&\leq C(t_4-t_3)^{\alpha/2}(t_2-t_1)^{\alpha/2} \label{ulg24}.
	\end{align}
\end{lemma}
\begin{proof}
	Let $(\zeta_{A,N})_{N \in \bN}$ be an approximating sequence of $\zeta_A$ in $\mathcal{A}$.
	Since $\zeta_{A,N} \to \zeta_A$ in $L^1_{loc}(\br^d)$ and $\zeta_A$ and $\zeta_{A,N}$ are uniformly exponential bounded, we have
	\begin{align} \label{eq:g12}
	&\bE \left[|\zeta_A(W_{t_2}) - \zeta_A(W_{t_1})||\zeta_A(W_{t_4}) - \zeta_A(W_{t_3})| \sum_{i=1}^4e^{M|W_{t_i}|}\right] \notag\\
	&=\lim_{N \to \infty} \bE \left[|\zeta_{A,N}(W_{t_2}) - \zeta_{A,N}(W_{t_1})||\zeta_{A,N}(W_{t_4}) - \zeta_{A,N}(W_{t_4})| \sum_{i=1}^4e^{M|W_{t_i}|}\right].
	\end{align}
	Next, we will show that
	\begin{equation}\label{eq:g22}
	\sup_{N \in \bN} \bE\left[|\zeta_{A,N}(W_{t_2}) - \zeta_{A,N}(W_{t_1})||\zeta_{A,N}(W_{t_4}) - \zeta_{A,N}(W_{t_4})| \sum_{i=1}^4e^{M|W_{t_i}|}\right]
	\leq \frac{C\sqrt{t_4-t_3} \sqrt{t_2-t_1}}{\sqrt{t_3-t_2}\sqrt{t_1}}. 
	\end{equation}
	We observe that
	\begin{align}\label{eq:g3}
	&\bE\left[|\zeta_{A,N}(W_{t_2}) - \zeta_{A,N}(W_{t_1})||\zeta_{A,N}(W_{t_4}) - \zeta_{A,N}(W_{t_4})| \sum_{i=1}^4e^{M|W_{t_i}|}\right] \notag\\
	&=\int_{\br^d}dx \int_{\br^d}dy \int_{\br^d}dz \int_{\br^d}dw
	|\zeta_{A,N}(z+w) - \zeta_{A,N}(w)||\zeta_{A,N}(x+y+z+w) - \zeta_{A,N}(y+z+w)| \notag\\
	&\quad \times \{e^{M|w|}+e^{M|z+w|}+e^{M|y+z+w|}+e^{M|x+y+z+w|} \}
	g_{t_4-t_3}(x) g_{t_3-t_2}(y) g_{t_2-t_1}(z) g_{t_1}(w) \notag\\
	&\leq C\int_{\br^d}dx \int_{\br^d}dy \int_{\br^d}dz \int_{\br^d}dw
	|\zeta_{A,N}(z+w) - \zeta_{A,N}(w)||\zeta_{A,N}(x+y+z+w) - \zeta_{A,N}(y+z+w)| \notag\\
	&\quad \times 
	g_{c(t_4-t_3)}(x) g_{c(t_3-t_2)}(y) g_{c(t_2-t_1)}(z) g_{ct_1}(w).
	\end{align}
	Using the mean-value theorem, \eqref{eq:g3} is bounded by
	\begin{align}\label{eq:g4}
	&\int_{\br^d}dx \int_{\br^d}dy \int_{\br^d}dz \int_{\br^d}dw \int_{0}^{1} d\theta \int_{0}^{1} d \delta
	\left|z_i \frac{\partial \zeta_{A,N}(w+\theta z)}{\partial w_i}\right|
	\left|x_i \frac{\partial \zeta_{A,N}(y+z+w+\delta x)}{\partial y_i}\right| \notag\\
	&\quad \times g_{c(t_4-t_3)}(x) g_{c(t_3-t_2)}(y) g_{c(t_2-t_1)}(z) g_{ct_1}(w) \notag\\
	&\leq C\sqrt{t_4-t_3} \sqrt{t_2-t_1} \int_{\br^d}dx \int_{\br^d}dy \int_{\br^d}dz \int_{\br^d}dw \int_{0}^{1} d\theta \int_{0}^{1} d \delta \notag\\
	&\quad \times \left|\frac{\partial \zeta_{A,N}(w+\theta z)}{\partial w_i}\right|
	\left|\frac{\partial \zeta_{A,N}(y+z+w+\delta x)}{\partial y_i}\right|
	g_{c(t_4-t_3)}(x) g_{c(t_3-t_2)}(y) g_{c(t_2-t_1)}(z) g_{ct_1}(w)
	\end{align}
	It follows from $\mathcal{A}(iii)$ that 
	\begin{align*}
	\int_{\br^d} \left|\frac{\partial \zeta_{A,N}(w+\theta z)}{\partial w_i}\right|
	\frac{e^{-\frac{|w|^2}{2ct_1}}}{(2 c\pi t_1)^{(d-1)/2}} dw
	&\leq C e^{K|\theta z|},\\
	\int_{\br^d} \left|\frac{\partial \zeta_{A,N}(y+z+w+\delta x)}{\partial y_i}\right|
	\frac{e^{-\frac{|y|^2}{2c(t_3-t_2)}}}{(2 c\pi (t_3-t_2))^{(d-1)/2}} dy
	&\leq C e^{K|z+w+\delta x|},
	\end{align*}
	thus \eqref{eq:g4} is bounded by
	\begin{align*}
	&\frac{C\sqrt{t_4-t_3} \sqrt{t_2-t_1}}{\sqrt{t_3-t_2}} \int_{\br^d}dx \int_{\br^d}dz \int_{\br^d}dw \int_{0}^{1} d\theta \left|\frac{\partial \zeta_{A,N}(w+\theta z)}{\partial w_i}\right|
	g_{c(t_4-t_3)}(x) g_{c(t_2-t_1)}(z) g_{ct_1}(w) \notag\\
	& \leq \frac{C\sqrt{t_4-t_3} \sqrt{t_2-t_1}}{\sqrt{t_3-t_2} \sqrt{t_1}} \int_{\br^d}dx \int_{\br^d}dz
	g_{c(t_4-t_3)}(x) g_{c(t_2-t_1)}(z)\\
	&=\frac{C\sqrt{t_4-t_3} \sqrt{t_2-t_1}}{\sqrt{t_3-t_2} \sqrt{t_1}}.	
	\end{align*}
	From \eqref{eq:g12} and \eqref{eq:g22} we get \eqref{ulg21}.
	
	The proofs of \eqref{ulg22}, \eqref{ulg23} and \eqref{ulg24} follow from similar arguments.
\end{proof}

In order to prove some key estimations in Lemma \ref{dai5}, we need the following inequality. Recall that $\eta_h(s)$ is defined in \eqref{def:EulerX}.

\begin{lemma}\label{APP_1}
	Let $n $ be a natural number such that $(n-1)h< T \leq nh$.
	Define $t_i^{h}=ih$ for $i=0,\ldots,n-1$ and $t_n^{h}=T$.
	Then it holds that 
	\begin{align*}
	\sum_{i=1}^{n-1} \int_{t_i^{h}}^{t_{i+1}^{h}}du \int_{t_1^{h}}^{t_i^{h}}ds
	\left\{
	\frac{1}{\sqrt{\eta_h(u)-s}\sqrt{\eta_h(s)}}
	+\frac{1}{\sqrt{\eta_h(s)}}
	+\frac{1}{\sqrt{\eta_h(u)-s}}
	\right\}
	\leq 4\sqrt{2}+2\sqrt{T}+\frac{4T^{3/2}}{3}.
	\end{align*}
	\begin{proof}
		We first note that if $s\geq t_1^h$ then $\eta_h(s) \geq s/2$.
		The first integral is estimated as follows	
		\begin{align*}
		\sum_{i=1}^{n-1} \int_{t_i^{h}}^{t_{i+1}^{h}}du \int_{t_1^{h}}^{t_i^{h}}ds
		\frac{1}{\sqrt{\eta_h(u)-s}\sqrt{\eta_h(s)}}
		&\leq \sqrt{2} h \sum_{i=2}^{n-1} \left\{ \int_{t_1^{h}}^{t_i^{h}/2}ds+\int_{t_i^{h}/2}^{t_i^{h}}ds\right\}
		\frac{1}{\sqrt{t_i^h-s}\sqrt{s}}\\
		&\leq \sqrt{2} h \sum_{i=2}^{n-1} \sqrt{\frac{2}{t_i}} \left\{ \int_{t_1^{h}}^{t_i^{h}/2} \frac{1}{\sqrt{s}}ds
		+\int_{t_i^{h}/2}^{t_i^{h}} \frac{1}{\sqrt{t_i^h-s}} ds\right\}\\
		&\leq 4\sqrt{2}.
		\end{align*}
		The second integral is estimated as follows
		\begin{align*}
		\sum_{i=1}^{n-1} \int_{t_i^{h}}^{t_{i+1}^{h}}du \int_{t_1^{h}}^{t_i^{h}}ds
		\frac{1}{\sqrt{\eta_h(s)}}
		&\leq \sqrt{2} h \sum_{i=1}^{n-1} \int_{t_1^{h}}^{t_i^{h}} \frac{ds}{\sqrt{s}}
		\leq 2\sqrt{T}.
		\end{align*}
		The third integral is estimated as follows
		\begin{align*}
		\sum_{i=1}^{n-1} \int_{t_i^{h}}^{t_{i+1}^{h}}du \int_{t_1^{h}}^{t_i^{h}}ds
		\frac{1}{\sqrt{\eta_h(u)-s}}
		&\leq h \sum_{i=2}^{n-1} \int_{t_1^{h}}^{t_i^{h}}	\frac{ds}{\sqrt{t_i^h-s}}
		=2h \sum_{i=2}^{n-1} \sqrt{t_i^h-h}
		=2 \sum_{i=2}^{n-1} \int_{t_{i-1}^h}^{t_{i}^h} \sqrt{t_{i-1}^h} ds\\
		&\leq 2 \int_{0}^{T} \sqrt{s}ds
		=\frac{4T^{3/2}}{3}.
		\end{align*}
		This concludes the proof of the statement.
	\end{proof}
	
\end{lemma}

Now by using Lemmas \ref{lem:auxi}, \ref{lem:auxi2} and \ref{APP_1}, we are able to obtain the following estimates on the difference of  $Y_t$ and $Y_t^h$, which are defined in  Section \ref{subs:change}.

\begin{lemma}\label{dai5}

Let $\alpha \in (0,1]$.
Suppose that $b \in \mathcal{B}(\alpha)$ defined by \eqref{classB}.
\begin{itemize}
	\item[(i)]
	If $b$ is bounded, then for any $p \geq 2$, there exists a constant $C>0$ such that
	\begin{align*}
	\bE[|Y_T-Y_T^h|^p]  \leq Ch^{1/2}+Ch^{p\alpha/2}.
	\end{align*}
		
	\item[(ii)]
	If $b$ is of linear growth, then there exists a constant $C>0$ such that
	\begin{align*}
	\bE[|Y_T-Y_T^h|^2]  \leq Ch^{1/2}+Ch^{\alpha}
	\end{align*}
	
	\end{itemize}
\end{lemma}

\begin{proof}
Using Minkowski's inequality, we obtain
$\bE [| Y_T - Y^h_T|^p] \leq C\{S_1(p) + S_2(p)\}$
where 
\begin{align*}
& S_1(p) = \bE \Big[ \Big| \int_0^T \big\{ (\sm^{-1}b)_j(x_0 + \sm W_s) - (\sm^{-1}b)_j(x_0 + \sm W_{\eta_h(s)}) \big\}  dW^j_s  \Big|^p\Big],\\ 
& S_2(p)=   \bE \Big[ \Big| \int_0^T \big\{ |\sm^{-1}b(x_0 + \sm W_s)|^2 - |\sm^{-1}b(x_0 + \sm W_{\eta_h(s)})|^2 \big\} ds \Big|^p\Big].
\end{align*}
It follows from Burkholder-Davis-Gundy's inequality that,
\begin{align*}
S_1(p)
&\leq C \sum_{j=1}^d \int_0^T \bE \Big[ | (\sm^{-1}b)_j(x_0 + \sm W_s) - (\sm^{-1}b)_j(x_0 + \sm W_{\eta_h(s)}) |^p \Big] ds.
\end{align*}
Since $b$ is of linear growth,
$$\sum_{j=1}^d \int_0^h \bE \Big[ | (\sm^{-1}b)_j(x_0 + \sm W_s) - (\sm^{-1}b)_j(x_0 + \sm W_{\eta_h(s)}) |^p \Big] ds \leq Ch.$$ 
Furthermore, it follows from Proposition \ref{prop:A} ii) and Lemma \ref{lem:auxi} that 
\begin{align*}
\sum_{j=1}^d &\int_h^T \bE \Big[ | (\sm^{-1}b)_j(x_0 + \sm W_s) - (\sm^{-1}b)_j(x_0 + \sm W_{\eta_h(s)}) |^p \Big] ds \\
& \leq C\int_h^T \left\{ \frac{\sqrt{s-\eta_h(s)}}{\sqrt{\eta_h(s)}}+|s-\eta_n(s)|^{p\alpha} \right\} ds
\leq C(h^{1/2}+h^{p\alpha/2}).
\end{align*}
Therefore, $ S_1(p) \leq  C (h^{1/2}+h^{p\alpha}).$

Proof of (i).
We assume that $b$ is bounded.
Using H\"older's inequality, we obtain
\begin{align*}
S_2(p)
&\leq \bE \Big[  \int_0^T \Big| |\sm^{-1}b(x_0 + \sm W_s)|^2 - |\sm^{-1}b(x_0 + \sm W_{\eta_h(s)})|^2 \Big|^p ds \Big]\\
& \leq C \sum_{j=1}^d \int_0^T \bE \Big[ \Big| |(\sm^{-1}b)_j (x_0 + \sm W_s)|^2 - |(\sm^{-1}b)_j(x_0 + \sm W_{\eta_h(s)})|^2 \Big|^p \Big]ds.
\end{align*} 
Since $b$ is bounded, it holds that for any $x,y \in \br$ and $j=1,\ldots,d$,
\begin{align*}
\Big| |(\sm^{-1}b)_j (x)|^2 - |(\sm^{-1}b)_j(y)|^2 \Big|^p
\leq C\left\{|(\sm^{-1}b_A)_j (x)-(\sm_A^{-1}b)_j (y)|^p+|(\sm^{-1}b_H)_j (x)-(\sm_H^{-1}b)_j (y)|^p\right\}.
\end{align*}
Thus, by dividing the integral into two parts: from $0$ to $h$ and from $h$ to $T$, and applying a similar argument as above, we obtain
 \begin{align*}
S_2(p)
&\leq Ch + C\int_h^{T} \frac{\sqrt{s-\eta_h(s)}}{\sqrt{\eta_h(s)}}ds + Ch^{p\alpha/2}
\leq C(h^{1/2} + h^{p\alpha/2}).
\end{align*}
Thus, $\bE[|Y_T-Y_T^h|^p]  \leq Ch^{1/2}+Ch^{p\alpha/2}$.

Proof of (ii).
We assume that $b$ is of linear growth.
We observe that
\begin{align*}
S_2(2)
=\int_0^Tdu \int_{0}^{u} ds
&\bE \Big[ \big\{ |\sm^{-1}b(x_0 + \sm W_s)|^2 - |\sm^{-1}b(x_0 + \sm W_{\eta_h(s)})|^2\big\}  \\
&\quad \times \big\{ |\sm^{-1}b(x_0 + \sm W_u)|^2 - |\sm^{-1}b(x_0 + \sm W_{\eta_h(u)})|^2 \big\} \Big].
\end{align*}
Let $n $ be a natural number such that $(n-1)h< T \leq nh$.
Define $t_i^{h}=ih$ for $i=0,\ldots,n-1$ and $t_n^{h}=T$.
Since $b$ is of linear growth, we have
\begin{align*}
&S_2(2)\\
&\leq Ch
+\sum_{i=1}^{n-1} \int_{t_i^{h}}^{t_{i+1}^{h}}du \left\{ \int_{0}^{t_1^{h}}ds+\int_{t_1^{h}}^{t_i^{h}}ds + \int_{t_i^{h}}^{u}ds \right\}\\
&\quad \times
\bE \Big[ \big\{ |\sm^{-1}b(x_0 + \sm W_s)|^2 - |\sm^{-1}b(x_0 + \sm W_{\eta_h(s)})|^2\big\} 
\big\{ |\sm^{-1}b(x_0 + \sm W_u)|^2 - |\sm^{-1}b(x_0 + \sm W_{\eta_h(u)})|^2 \big\} \Big]\\
&\leq Ch
+\sum_{i=1}^{n-1} \int_{t_i^{h}}^{t_{i+1}^{h}}du \int_{t_1^{h}}^{t_i^{h}}ds\\
&\quad \times
\bE \Big[ \big\{ |\sm^{-1}b(x_0 + \sm W_s)|^2 - |\sm^{-1}b(x_0 + \sm W_{\eta_h(s)})|^2\big\} 
\big\{ |\sm^{-1}b(x_0 + \sm W_u)|^2 - |\sm^{-1}b(x_0 + \sm W_{\eta_h(u)})|^2 \big\} \Big].
\end{align*}
Since that for any $x,y \in \br^d$,
\begin{align*}
&|\sigma^{-1}b(x)-\sigma^{-1}b(y)|^2\\
&=\sum_{j=1}^{d}
\Big\{
|(\sigma^{-1}b_A)_j(x)|^2-|(\sigma^{-1}b_A)_j(y)|^2\\
&+\{(\sigma^{-1}b_H)_j(x)+\{(\sigma^{-1}b_H)_j(y)\} \{(\sigma^{-1}b_H)_j(x)-\{(\sigma^{-1}b_H)_j(y)\}\\
&+2(\sigma^{-1}b_H)_j(x) \{(\sigma^{-1}b_A)_j(x)-(\sigma^{-1}b_A)_j(y)\}
+2(\sigma^{-1}b_A)_j(y) \{(\sigma^{-1}b_H)_j(x)-(\sigma^{-1}b_H)_j(y)\}
\Big\}.
\end{align*}
by using Lemma \ref{lem:auxi2} with $t_1=\eta_h(s)$, $t_2=s$, $t_3=\eta_h(u)$ and $t_4=u$, we obtain	
\begin{align*}
S_2(2)
\leq Ch
+C\sum_{i=1}^{n-1} \int_{t_i^{h}}^{t_{i+1}^{h}}du \int_{t_1^{h}}^{t_i^{h}}ds
\left\{
\frac{h}{\sqrt{\eta_h(u)-s}\sqrt{\eta_h(s)}}
+\frac{h^{\frac{1}{2}+\frac{\alpha}{2}}}{\sqrt{\eta_h(s)}}
+\frac{h^{\frac{1}{2}+\frac{\alpha}{2}}}{\sqrt{\eta_h(u)-s}}
+h^{\alpha}
\right\}
\end{align*}
Therefore from Lemma \ref{APP_1}, we have
\begin{align*}
S_2(2)
\leq C(h+h^{\frac{1}{2}+\frac{\alpha}{2}}+h^{\alpha}).
\end{align*}
Thus, $\bE[|Y_T-Y_T^h|^2] \leq Ch^{1/2}+Ch^{\alpha}$.
\end{proof}

\subsection{Proof of Theorem \ref{main0}}

We first recall that $Z_t,Z_t^h,Y_t$ and $Y_t^h$ were defined in Section \ref{subs:change}.
It follows from Lemma \ref{dai5} that $Y^h_T$ converges in probability to $Y_T$ as $h \to 0$.
Thus  $Z^h_T$ also converges in probability to $Z_T$ as $h \to 0$.
Moreover $\bE[Z^h_T] = \bE[Z_T] = 1$ for all $h>0$.
Therefore, it follows from Proposition 4.12 in \cite{Ka} that 
\begin{equation} \label{eq:ghZ}
\lim_{h \to 0} \bE[|Z^h_T - Z_T|] = 0.
\end{equation}  
On the other hand, since $f$ is bounded, it follows from (\ref{eq:error}) that
$$|\bE[f(X)-f(X^h)]| \leq C\bE [|Z^h_T - Z_T|].$$
This estimate together with (\ref{eq:ghZ}) implies the desired result.
\qed

\subsection{Proof of Theorem \ref{main1}} \label{proof:m1}
It is clear that  $|e^x - e^y| \leq (e^x+ e^y)|x-y|$. This estimate and H\"older's inequality imply that $|\bE[f(X)-f(X^h)]|$ is bounded by 
\begin{align*}
&  \bE \big[\big |f(x_0+\sm W)(Z_T + Z^h_T)(Y_T - Y^h_T)\big|\big]\\ 
&\leq \big\| f(x_0 +\sm W)(Z_T + Z^h_T)\big\|_2 \| Y_T - Y^h_T\|_2\\ 
&\leq \Big(\bE\big[|f(x_0+\sm W)|^r\big]\Big)^{2/r} \Big( \bE\big[ |Z_T + Z_T^h|^{2r/(r-2)}\big]\Big)^{(r-2)/r} \| Y_T - Y^h_T\|_2.
\end{align*}
Thanks to the integrability condition of $f$ and Lemma \ref{lem:powZ}, 
\begin{equation*}
 \Big(\bE\big[|f(x_0+\sm W)|^r\big]\Big)^{2/r} \Big( \bE\big[ |Z_T + Z_T^h|^{2r/(r-2)}\big]\Big)^{(r-2)/r} \leq C < \infty.
 \end{equation*}
This together with Lemma \ref{dai5}  implies the desired result.
\qed

\subsection{Proof of Corollary \ref{main1.1}}
Since $g$ is $\beta$-H\"older continuous, it holds that $\bE\left[ |g\left( \max_{0\leq s \leq T}|x_0+\sigma W_s| \right)|^r \right] <\infty$ for any $r >2$.
Thanks to Theorem \ref{main1}, it remains to estimate
\begin{align*}
\left| \bE\left[ g\left( \max_{0\leq s \leq T}|X_s^h| \right) \right] - \bE\left[g\left(\max_{0\leq s \leq T}|X_{\eta_h(s)}^{h}| \right)\right] \right|.
\end{align*}
Since $g$ is $\beta$-H\"older continuous and $b$ is sub-linear growth, we have
\begin{align}\label{max_1}
&\left| \bE\left[ g\left( \max_{0\leq s \leq T}|X_s^h| \right) \right] - \bE\left[g\left(\max_{0\leq s \leq T}|X_{\eta_h(s)}^{h}| \right)\right] \right| \notag\\
&\leq C \bE\left[\left| \max_{0\leq s \leq T}|X_s^h| - \max_{0\leq s \leq T}|X_{\eta_h(s)}^{h}| \right|^{\beta}\right]
\leq C \bE\left[ \max_{0\leq s \leq T}|X_s^h - X_{\eta_h(s)}^{h}| \right] ^{\beta}\notag\\
&\leq C \bE\left[ \max_{0\leq s \leq T} \{|b(X_{\eta_h(s)}^h)|(s-\eta_h(s)) + |\sigma(W_s-W_{\eta_h(s)})|\} \right]^{\beta} \notag\\
&\leq C h^{\beta} 
+C\bE \left[\max_{0\leq s \leq T}|W_s-W_{\eta_h(s)}|\} \right]^{\beta}.
\end{align}
By modulus continuity of Brownian motion (e.g. Lemma 4.4 in \cite{Pe95}), we have
\begin{align*}
\bE\left[\max_{0\leq s \leq t \leq T, |t-s|\leq h} |W_t-W_s|^2 \right]
\leq C h \log\left(1/h\right),
\end{align*}
Thus from \eqref{max_1}, we obtain
\begin{align*}
\left| \bE\left[ g\left( \max_{0\leq s \leq T}|X_s^h| \right) \right] - \bE\left[g\left(\max_{0\leq s \leq T}|X_{\eta_h(s)}^{h}| \right)\right] \right|
\leq C\left\{h^{\beta}+(h \log(1/h))^{\beta/2} \right\},
\end{align*}
which implies the proof of the statement.
\qed

\subsection{Proof of Corollary \ref{main1.5}}
We first note that if $b$ is bounded, then it holds from Theorem 2.1 in \cite{LeMe} (see also Corollary 3.2 in \cite{NT}) that there exists a density function $p_t^h$ of $X_t^h$ for $t \in (0,T]$ and it satisfies the following Gaussian upper bound, i.e., 
\begin{align*}
p_t^h(x) \leq C \frac{e^{-\frac{|x-x_0|^2}{2ct}}}{t^{d/2}}.
\end{align*}
for some positive constants $C$ and $c$.

Now we prove that $\bE\left[\left| f\left( \int_0^T g(x_0+W_s) ds \right) \right|^r \right]$ is finite for any $r>2$.
Since $|g(x)|\leq Ke^{K|x|}$, it follows from Jensen's inequality that for any $r>2$,
\begin{align*}
\bE\left[\left| f\left( \int_0^T g(x_0+W_s) ds \right) \right|^r \right]
\leq C+C \int_0^T \bE[|g(x+W_s)|^r]ds
\leq C+C \int_0^T \prod_{i=1}^d \bE[e^{rKW_s^i}] ds.
\end{align*}
Since $x^2/(4s)+K^2r^2s \geq Krx$, we have
\begin{align*}
\bE\left[\left| f\left( \int_0^T g(x_0+W_s) ds \right) \right|^r \right]
&\leq C+C \int_0^T e^{dKrs} ds<\infty.
\end{align*}
Thanks to Theorem \ref{main1}, it remains to prove that
\begin{align*}
\left| \bE\left[ f \left( \int_0^Tg(X_s^h)ds \right) \right] - \bE\left[f\left(\int_0^Tg(X_{\eta_h(s)}^h)ds \right)\right] \right| \leq C h^{\beta/2}.
\end{align*}
Since $f$ is a Lipschitz continuous function, we have
\begin{align*}
\left| \bE\left[ f\left( \int_0^Tg(X_s^h)ds \right) \right] - \bE\left[f\left(\int_0^Tg(X_{\eta_h(s)}^h)ds \right)\right] \right|
&\leq C\int_0^T \bE\left[\left|g(X_s^h)-g(X_{\eta_h(s)}^h) \right| \right] ds
\end{align*}
If $s \in (0,h]$, then by using the Gaussian upper bound for $p_s^h(x)$, we have
\begin{align*}
\int_0^h  \bE\left[\left|g(X_s^h)-g(X_{\eta_h(s)}^h) \right|\right] ds
&\leq \int_0^h \bE[|g(X_s^h)|] ds + |g(x_0)| h\\
&\leq C \int_0^h ds \int_{\br^d} |g(x)| \frac{e^{-\frac{|x-x_0|^2}{2cs}}}{s^{d/2}} + |g(x_0)| h\\
&\leq C h.
\end{align*}
On the other hand, for $s \in [h,T]$, using the Gaussian upper bound for $p_{\eta_h(s)}^h$ and following the proof of Lemma \ref{lem:auxi} (see also Lemma 3.5 in \cite{NT}), we have
\begin{align*}
\bE[|g(X_s^h) - g(X_{\eta_h(s)}^h)|] \leq \frac{C\sqrt{s-\eta_h(s)}}{\sqrt{\eta_h(s)}} + C h^{\beta/2}.
\end{align*}
Therefore, we conclude the proof of the statement.
\qed

\subsection{Proof of Corollary \ref{main2}}
Recall that  $\tau_D^W,\tau_D^{W,h},Z,Z^h$ were defined at the beginning of Section \ref{subs:change}. 
It suffices to proof the statement for the case that $g$ is positive. It follows from \eqref{eq:error2} that $\bE[g(X_T)\1_{(\tau_D > T)}] - \bE [g(X^h_T)\1_{(\tau^h_D > T)}] = E_1 + E_2$ where 
\begin{align*} 
E_1 & =  \bE[g(x_0 + \sm W_T )(Z_T - Z^h_T) \1_{(\tau^{W,h}_D > T)}],\\
E_2 &= \bE[ g(x_0 + \sm W_T )Z_T (\1_{(\tau^W_D > T)} - \1_{(\tau^{W,h}_D > T)})]. 
\end{align*}
It follows from the proof of Theorem \ref{main1} that 
\begin{equation} \label{dai2}
 |E_1| \leq \bE[| g(x_0 + \sm W_T )(Z_T - Z^h_T)|] \leq Ch^{\frac{\alpha}{2} \wedge \frac14}.
 \end{equation}
 Applying H\"older's inequality, we have 
 \begin{align*}
 |E_2| \leq \|Z_T\|_q \| g(x_0 + \sm W_T ) (\1_{(\tau^W_D > T)} - \1_{(\tau^{W,h}_D > T)})\|_p,
  \end{align*}
where $q$ is the conjugate of $p$. Thanks to Lemma \ref{lem:powZ} and the fact  $\tau^{W,h}_D  \geq \tau^W_D$, we have
 \begin{align*}
 |E_2| &\leq C_p \Big( \bE \Big[ g^p(x_0 + \sm W_T ) \1_{(\tau^{W,h}_D \geq T)}\Big] -  \bE \Big[ g^p(x_0 + \sm W_T ) \1_{(\tau^{W}_D \geq T)}\Big] \Big)^{1/p}.
\end{align*}
It follows from  Theorem 2.4 in \cite{G00} that there exists a constant $K(T)$ such that 
  \begin{align*}
 |E_2| & \leq C_p \Big(\frac{K(T)\|g^p\|_\infty}{1\wedge \eps^4}\Big)^{1/p}h^{\frac{1}{2p}}.
  \end{align*}
Combining this estimate with \eqref{dai2} completes the proof.
\qed

\subsection{Proof of Theorem \ref{main2.5}}
The proof is the based on the perturbation or Levi's parametrix method (see \cite{Fr64}) for the density functions $p_t(x_0,y)$ and $p_t^h(x_0,y)$.
It is known that when the drift coefficient $b$ is bounded and H\"older continuous, it holds that for any $t\in(0,T]$
\begin{align}\label{rep:density_1}
p_t(x_0,y)
&=g_{ta}(x_0,y)+ \int_{0}^{t}ds \int_{\br^d} dz \left\langle \nabla_x g_{(t-s)a}(z,y), b(z) \right\rangle p_t(x_0,z)\notag\\
&=g_{ta}(x_0,y)+\int_{0}^{t} \bE\left[ \left\langle \nabla_x g_{(t-s)a}(X_s,y), b(X_s) \right\rangle \right] ds,
\end{align}
where $a=\sigma \sigma^*$ and $g_{ta}(x_0,y)$ is called the parametrix (see, Chapter 1 in \cite{Fr64}).

We first consider a  representation which is similar to  \eqref{rep:density_1} for the density functions $p_t(x_0,y)$ and $p_t^h(x_0,y)$ under the assumption that the drift coefficient is bounded measurable.
Recently, Makhlouf \cite[Theorem 3.1]{Ma15} proved that the representation \eqref{rep:density_1} also holds for a Brownian motion with random drift $b=(b_t)_{0\leq t \leq T}$ under the suitable growth condition.
For the convenience of the reader, we  give a proof below.
\begin{proposition}[Makhlouf \cite{Ma15}]\label{rep:density_3}
	Let $W=(W_t)_{0\leq t \leq T}$ be a $d$-dimensional $(\mathcal{F}_t)_{0\leq t \leq T}$-Brownian motion.
	Suppose that the stochastic process $b=(b_t)_{0\leq t \leq T}$ is adapted to $(\mathcal{F}_t)_{0\leq t \leq T}$ and there exists a constant $K>0$ such that $\sup_{0\leq t \leq T}|b_t|\leq K$ almost surely.
	Then, for any $t \in (0,T]$ and $x \in \br^d	$, the stochastic process $Y_t:=x+\int_{0}^{t} b_s ds + \sigma W_t$ admits a density $\gamma_t(x,\cdot)$ with respect to Lebesgue measure and for any $y \in \br^d$,
	\begin{align}\label{rep:density_2}
	\gamma_t(x,y)
	&=g_{ta}(x,y)+ \int_{0}^{t} \bE\left[ \left\langle \nabla_x g_{(t-s)a}(Y_s,y), b_s \right\rangle \right] ds.
	\end{align}
\end{proposition}
\begin{proof}
	It suffices to prove that for any  infinitely differentiable functions $f:\br^d \to \br$ with compact support  
	\begin{align*}
	\bE[f(Y_t)]
	=\int_{\br^d} f(y) g_{ta}(x,y) dy
	+ \int_{\br^d} dy f(y) \int_0^t ds \bE\left[ \langle \nabla_x g_{(t-s)a}(Y_s,y), b_s \rangle \right].
	\end{align*}
	It is well-known that the function $u(s,x):=\bE\left[f\left( x+\sigma W_{t-s} \right) \right]$
	is a solution to the following partial differential equation:
	\begin{align}\label{heat_eq_1}
	\partial_s u(s,x)
	+\frac{1}{2}\sum_{i,j=1}^d a_{i,j} \frac{\partial^2 }{\partial x_i \partial x_j}u(s,x)
	&=0,~(s,x)\in [0,t) \times \br^d,\\
	u(t,x)&=f(x),~ x\in \br^d \notag.
	\end{align}
	Hence we have
	\begin{align}
	\bE\left[f\left(x+\sigma W_t \right) \right]
	&=u(0,x), \label{expectation_2}\\
	\bE[f(Y_t)]
	&=\bE\left[u(t,Y_t)\right].\label{expectation_3}
	\end{align}
	By using It\^o's formula and \eqref{heat_eq_1},	it holds that for any $\varepsilon \in (0,T)$,
	\begin{align}\label{Ito_3}
	u(t-\varepsilon,Y_{t-\varepsilon})
	&=u(0,x)+\int_0^{t-\varepsilon} \langle \nabla_x u(s,Y_s), b_s \rangle ds
	+\sum_{i,j=1}^{d} \int_0^{t-\varepsilon} \sigma_{i,j} \frac{\partial}{\partial x_i} u(s,Y_s) dW_s^j.
	\end{align}
	Since for any $i=1,\ldots,d$, $s \in [0,T)$ and $x \in \br^d$,
	\begin{align}\label{deriva_u}
	\frac{\partial}{\partial x_i} u(s,x)
	=\frac{\partial}{\partial x_i} \bE\left[f\left(x+\sigma W_{t-s} \right) \right]
	=\int_{\br^d} f(y) \frac{\partial}{\partial x_i} g_{(t-s)a}(x,y)dy
	\end{align}
	and
	\begin{align*}
	\left| \frac{\partial}{\partial x_i} u(s,x)\right|
	\leq \frac{C\|f\|_{\infty}}{(t-s)^{1/2}},
	\end{align*}
	for some constant $C>0$, the stochastic integral in \eqref{Ito_3} is martingale.
	By taking the expectation and Fubini's theorem, we have from \eqref{expectation_2}
	\begin{align*}
	\bE[u(t-\varepsilon,Y_{t-\varepsilon})] 
	&=\bE\left[f\left(x+\sigma W_t \right) \right]
	+\int_0^{t-\varepsilon} \bE\left[ \langle \nabla_{x} u(s,Y_s),b_s \rangle \right]ds.
	\end{align*}
	Taking $\varepsilon \to 0$ and using the dominated convergence theorem, we have from \eqref{expectation_3} and \eqref{deriva_u},
	\begin{align*}
	\bE[f(Y_t)]
	&=\lim_{\varepsilon \to 0+} \bE[u(t-\varepsilon,Y_{t-\varepsilon})]
	= \bE\left[f\left(x+\sigma W_t \right) \right]
	+ \int_0^{t} \bE\left[ \langle \nabla_x u(s,Y_s),b_s\rangle \right]ds \notag\\ 
	&= \int_{\br^d} f(y) g_{ta}(x,y) dy
	+ \int_{\br^d}  f(y) \int_0^t  \bE\left[ \langle \nabla_x g_{(t-s)a}(Y_s,y), b_s \rangle \right]dsdy. 
	\end{align*}
	This concludes the proof.	
\end{proof}

\begin{proof}[Proof of Theorem \ref{main2.5}]
	Using Proposition \ref{rep:density_3}, we have
	\begin{align*}
	p_T(x_0,y)
	&=g_{Ta}(x_0,y)+ \int_{0}^{T} \bE\left[ \left\langle \nabla_x g_{(T-s)a}(X_s,y), b(X_s) \right\rangle \right] ds,\\
	p_T^h(x_0,y)
	&=g_{Ta}(x_0,y)+ \int_{0}^{T} \bE\left[ \left\langle \nabla_x g_{(T-s)a}(X_s^h,y), b(X_{\eta_h(s)}^h) \right\rangle \right] ds.
	\end{align*}
	Moreover, from Lemma \ref{lem:decom}, we have
	\begin{align*}
	p_T(x_0,y)-p_T^h(x_0,y)
	&=\int_{0}^{T} \bE\left[ \left\langle \nabla_x g_{(T-s)a}(x_0+\sigma W_s,y), Z_Tb(x_0+\sigma W_s)-Z_T^h b(x_0+\sigma W_{\eta_h(s)}) \right\rangle \right] ds.
	\end{align*}
	By using Jensen's inequality and Schwarz's inequality,  there exists $c>0$ such that
	\begin{align*}
	\left| p_T(x_0,y)-p_T^h(x_0,y) \right|
	&\leq \int_{0}^{T} \bE\left[ \left| \nabla_x g_{(T-s)a}(x_0+W_s,y) \right| \left| Z_Tb(x_0+\sigma W_s)-Z_T^h b(x_0+\sigma W_{\eta_h(s)}) \right| \right] ds\\
	&\leq C \int_{0}^{T} \frac{1}{\sqrt{T-s}} \bE\left[ g_{c(T-s)}(x_0+W_s,y) \left| Z_Tb(x_0+\sigma W_s)-Z_T^h b(x_0+\sigma W_{\eta_h(s)}) \right| \right] ds\\
	&\leq C \int_{0}^{T} \frac{1}{\sqrt{T-s}} (A_s+B_s) ds,
	\end{align*}
	where
	\begin{align*}
	A_s
	&:=\bE\left[ g_{c(T-s)}(x_0+\sigma W_s,y) \left| Z_T-Z_T^h \right| \right]\\
	B_s
	&:=\bE\left[ g_{c(T-s)}(x_0+\sigma W_s,y) \left| Z_T \right| \left| b(x_0+\sigma W_s)-b(x_0+\sigma W_{\eta_h(s)}) \right| \right].
	\end{align*}
	Note that for any $q>1$, there exist $C_q$ and $c_q$ such that 
	\begin{align}\label{esti_gauss}
	\bE\left[|g_{c(T-s)}(x_0+\sigma W_s,y)|^q\right]^{1/q}
	\leq C_q \left( \frac{T}{T-s} \right)^{\frac{d(q-1)}{2q}} g_{c_qT}(x_0,y).
	\end{align}
	
	We first consider the upper bounded for $A_s$.
	By using $|e^x-e^y|\leq (e^x+e^y)|x-y|$ and H\"older's inequality, for any $p,q>1$ with $1/p+1/q=1$ and $r>1$, we have
	\begin{align*}
	A_s
	&\leq \bE\left[ |g_{c(T-s)}(x_0+\sigma W_s,y)|^q\right]^{1/q} \bE\left[ |Z_T+Z_T^h|^{p} \left| Y_T-Y_T^h \right|^p \right]^{1/p}\\
	&\leq C_q \left( \frac{T}{T-s} \right)^{\frac{d(q-1)}{2q}} g_{c_qT}(x_0,y) 
	\bE\left[ |Z_T+Z_T^h|^{pr/(r-1)} \right]^{(r-1)/(pr)} \bE\left[ \left| Y_T-Y_T^h \right|^{pr} \right]^{1/(pr)}.
	\end{align*}
	Therefore, it holds that
	\begin{align}\label{esti_As_1}
	\int_{0}^{T} \frac{1}{\sqrt{T-s}} A_s ds
	&\leq C_{p,r} C_q g_{c_qT}(x_0,y)
	\int_{0}^{T} \frac{1}{\sqrt{T-s}} \left( \frac{T}{T-s} \right)^{\frac{d(q-1)}{2q}} ds
	\bE\left[ \left| Y_T-Y_T^h \right|^{pr} \right]^{1/(pr)}.
	\end{align}
	By choosing $p>d$ that is $q=p/(p-1)<d/(d-1)$, from Lemma \ref{dai5}, we have
	\begin{align}\label{esti_As_3}
	\int_{0}^{T} \frac{1}{\sqrt{T-s}} A_s ds
	&\leq C_{p,r} C_q g_{c_qT}(x_0,y) T^{1/2}
	\bE\left[ \left| Y_T-Y_T^h \right|^{pr} \right]^{1/(pr)} \notag\\
	&\leq C_{p,q,r} g_{c_qT}(x_0,y) T^{1/2}
	\{h^{\alpha/2}+h^{1/(2pr)}\}.
	\end{align}
	
	Now we give an upper bound for $B_s$.
	By using H\"older's inequality, for any $p>d$ and $q=p/(p-1)$ and $r>1$, we have
	\begin{align*}
	B_s
	&\leq \bE\left[ |g_{c(T-s)}(x_0+\sigma W_s,y)|^q\right]^{1/q} \bE\left[ |Z_T|^{pr/(r-1)}\right]^{(r-1)/(pr)} \bE\left[ \left| b(x_0+\sigma W_s)-b(x_0+\sigma W_{\eta_h(s)}) \right|^{pr} \right]^{1/(pr)}\\
	&\leq C_{p,q,r} \left( \frac{T}{T-s} \right)^{\frac{d(q-1)}{2q}} g_{c_qT}(x_0,y)
	\bE\left[ \left| b(x_0+\sigma W_s)-b(x_0+\sigma W_{\eta_h(s)}) \right|^{pr} \right]^{1/(pr)}.
	\end{align*}
	By Lemma \ref{lem:auxi} for any $s\geq h$,
	\begin{align*}
	\bE\left[ \left| b(x_0+\sigma W_s)-b(x_0+\sigma W_{\eta_h(s)}) \right|^{pr} \right]^{1/(pr)}
	\leq C\{h^{\alpha/2}+\frac{h^{1/(2pr)} }{\eta_h(s)^{1/(2pr)}}\}.
	\end{align*}
	Since $b$ is bounded, we have
	\begin{align}\label{esti_Bs_1}
	\int_{0}^{T} \frac{1}{\sqrt{T-s}} B_s ds
	&\leq C_{p,q,r}g_{c_qT}(x_0,y)
	\int_{0}^{h} \frac{1}{\sqrt{T-s}} \left( \frac{T}{T-s} \right)^{\frac{d(q-1)}{2q}} ds \notag\\
	&+C_{p,q,r}g_{c_qT}(x_0,y) h^{\alpha/2}
	\int_{h}^{T} \frac{1}{\sqrt{T-s}} \left( \frac{T}{T-s} \right)^{\frac{d(q-1)}{2q}}ds \notag\\
	&+C_{p,q,r}g_{c_qT}(x_0,y) h^{1/(2pr)}
	\int_{h}^{T} \frac{1}{\sqrt{T-s}} \left( \frac{T}{T-s} \right)^{\frac{d(q-1)}{2q}} \frac{1}{\eta_h(s)^{1/(2pr)}} ds.
	\end{align}
	By the assumption $h \in (0,T/2)$, it holds that
	\begin{align*}
	\int_{0}^{h} \frac{1}{\sqrt{T-s}} \left( \frac{T}{T-s} \right)^{\frac{d(q-1)}{2q}} ds
	\leq 2^{\frac{1}{2}+\frac{d(q-1)}{2q}} \frac{h}{T^{1/2}}.
	\end{align*}
	Since $q<d/(d-1)$, we have
	\begin{align*}
	\int_{h}^{T} \frac{1}{\sqrt{T-s}} \left( \frac{1}{T-s} \right)^{\frac{d(q-1)}{2q}} ds
	=\frac{2q}{q-dq+d}(T-h)^{\frac{q-dq+d}{2q}}
	\end{align*}
	and
	\begin{align*}
	\int_{h}^{T} \frac{1}{\sqrt{T-s}} \left( \frac{1}{T-s} \right)^{\frac{d(q-1)}{2q}} \frac{1}{\eta_h(s)^{1/(2pr)}} ds
	\leq \left(T-h\right)^{1-1/(2pr)} B\left(\frac{1}{2}+\frac{d(q-1)}{2q},\frac{1}{2pr}\right),
	\end{align*}
	where $B(x,y)$ is the beta function.
	Therefore, we obtain
	\begin{align}\label{esti_Bs_3}
	\int_{0}^{T} \frac{1}{\sqrt{T-s}} B_s ds
	\leq C_{p,q,r}g_{c_pT}(x_0,y) \left\{\frac{h}{T^{1/2}}+h^{\alpha/2}+h^{1/(2pr)} \right\},
	\end{align}
	which concludes the proof.

\end{proof}

\subsection{Proof of Theorem \ref{main3}}
In the same way as in subsection \ref{subs:change}, we have the following Lemma.
\begin{lemma}\label{change_RSDE}
Let $X$ be a solution of the SDE \eqref{RSDE} and let $X^h$ be the Euler-Maruyama approximation of $X$ defined in \eqref{EM_RSDE} with $h>0$.
Suppose that  $b$ is a measurable function with sub-linear growth. Let $U=(U_t)_{0 \leq t \leq T}$ be the unique solution of the equation $U_t=x_0 + \sigma W_t + L^0_t(U)$ and
\begin{align*}
\hat{Z}_t&:=e^{\hat{Y_t}}, \quad  \hat{Y}_t:=\int_0^t b(U_s) dW_s - \frac12 \int_0^t b^2(U_s) ds \\
\hat{Z}_t^h&:=e^{\hat{Y}_t^h}, \quad  \hat{Y}^h_t:=\int_0^t b(U_{\eta_h(s)}) dW_s - \frac12 \int_0^t b^2(U_{\eta_h(s)}) ds.
\end{align*}
Then it holds that 
\begin{align*}
\bE[f(X)] - \bE[f(X^h)] = \bE[f(U)(\hat{Z}_T-\hat{Z}_T^h)]
\end{align*}
for all measurable functions $f: C[0,T] \to \br$ provided that the above expectations are integrable.
\end{lemma}
\begin{proof}
We define new measures $\hat{\bQ}$ and $\hat{\bQ}^h$ as 
\begin{align*}
&\frac{d \hat{\bQ}}{d \pr} =  \exp \Big( -\int_0^T \sm^{-1} b(X_s) dW_s - \frac{1}{2} \int_0^T |\sm^{-1}b(X_s)|^2 ds \Big),\\
&\frac{d \hat{\bQ}^h}{d \pr	} =  \exp \Big( -\int_0^T \sm^{-1} b(X^h_{\eta_h(s)}) dW_s - \frac{1}{2} \int_0^T |\sm^{-1}b(X^h_{\eta_h(s)})|^2 ds\Big).
\end{align*}
Since $b$ is of sub-linear growth and the fact that $0 \leq L^0_t(X) \leq |\sigma| \sup_{0 \leq s \leq t} |W_s|$, by following the proof of Lemma \ref{lem:decom} we can show that $\hat{\bQ}$ and $\hat{\bQ}^h$ are probability measures.
Furthermore, it follows from the Girsanov theorem that the processes $\hat{B}=(\hat{B}_t)_{0\leq t \leq T}$ and $\hat{B}^h=(\hat{B}_t^h)_{0\leq t \leq T}$ defined by
\begin{align*}
\hat{B}_t = W_t + \int_0^t \sm^{-1}b(X_s)ds, \ \hat{B}^{h}_t = W_t + \int_0^t \sm^{-1}b(X^h_{\eta_h(s)})ds, 0\leq t \leq T,
\end{align*}
are Brownian motions with respect to $\hat{\bQ}$ and $\hat{\bQ}^h$ respectively.
Note that $X_s=x_0 + \sigma \hat{B}_s + L_s^0(X)$ and $X_s^h = x_0 + \sigma \hat{B}_s^h + L_s^0(X^h)$.
Therefore,
\begin{align*}
&\bE[f(X)] = \bE_{\hat{\bQ}} \Big[f(X)\frac{d\pr}{d\hat{\bQ}}\Big] \\
&= \bE_{\hat{\bQ}} \Big[ f(X) \exp \Big( \int_0^T \sigma^{-1}b(X_s)d\hat{B}_s - \frac12 \int_0^T |\sigma^{-1}b(X_s)|^2ds\Big)\Big]\\
&= \bE_{\hat{\bQ}} \Big[ f(x_0+\sigma \hat{B} + L^0(X)) \exp \Big( \int_0^T \sigma^{-1}b(x_0+\sigma \hat{B}_s + L_s^0(X))d\hat{B}_s - \frac12 \int_0^T |\sigma^{-1}b(x_0+\sigma \hat{B}_s + L_s^0(X))|^2ds\Big)\Big].
\end{align*}
Since $(X, \hat{B})|_{\hat\bQ} \overset{d}{=} (U, W)|_{\pr}$, the above term equals to
\begin{align*}
& \bE \Big[ f(x_0+\sigma W + L^0(U)) \exp \Big( \int_0^T \sigma^{-1}b(x_0+\sigma W_s + L_s^0(U))dW_s - \frac12 \int_0^T |\sigma^{-1}b(x_0+\sigma W_s + L_s^0(U))|^2ds\Big)\Big]\\
&= \bE \Big[ f(U) \exp \Big( \int_0^T \sigma^{-1}b(U_s)dW_s - \frac12 \int_0^T |\sigma^{-1}b(U_s)|^2ds\Big)\Big]\\
&= \bE [f(U)\hat{Z}_T].
\end{align*}
Repeating the previous argument leads to $\bE[f(X^h)] = \bE [f(U)\hat{Z}^h_T],$ which concludes the statement.
\end{proof}

In the same way as Lemma \ref{lem:powZ}, we have the following estimate of the moments of $\hat{Z}$ and $\hat{Z}^h$.

\begin{lemma}\label{Lp-bound}
Suppose that $b$ is of sub-linear growth.
Then for any $p>0$,
$$\bE[|\hat{Z}_T|^p + |\hat{Z}^h_T|^p] \leq C < \infty,$$
for some constant $C$ which is not depend on $h$.
\end{lemma}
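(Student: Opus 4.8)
The plan is to repeat almost verbatim the proof of Lemma~\ref{lem:powZ}, the only new ingredient being an a priori bound on the reflecting term. Applying Lemma~\ref{Skorohod} with $z=x_0$ and $y_t=\sigma W_t$ gives $L^0_t(U)=\max\{0,\max_{0\le s\le t}(-x_0-\sigma W_s)\}$, whence $0\le L^0_t(U)\le|\sigma|\sup_{0\le s\le t}|W_s|$ and therefore
\begin{align*}
|U_t|\le|x_0|+2|\sigma|\sup_{0\le s\le T}|W_s|,\qquad 0\le t\le T,
\end{align*}
and the same bound holds with $U_t$ replaced by $U_{\eta_h(t)}$, uniformly in $h$.

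Next, fix $p>0$. Exactly as in Lemma~\ref{lem:powZ}, decompose the exponent defining $\hat Z_T$ as
\begin{align*}
p\hat Y_T=\tfrac{1}{2}\Big(2p\!\int_0^T\! b(U_s)\,dW_s-2p^2\!\int_0^T\! b^2(U_s)\,ds\Big)+\Big(p^2-\tfrac{p}{2}\Big)\!\int_0^T\! b^2(U_s)\,ds,
\end{align*}
and apply the Cauchy--Schwarz inequality to obtain
\begin{align*}
\bE[|\hat Z_T|^p]&\le\Big(\bE\Big[\exp\Big(2p\!\int_0^T\! b(U_s)\,dW_s-2p^2\!\int_0^T\! b^2(U_s)\,ds\Big)\Big]\Big)^{1/2}\\
&\quad\times\Big(\bE\Big[\exp\Big((2p^2-p)\!\int_0^T\! b^2(U_s)\,ds\Big)\Big]\Big)^{1/2}.
\end{align*}
Since $b$ is of sub-linear growth, for every $\delta>0$ there is $M=M(\delta)$ with $b^2(y)\le\delta|y|^2+M$; combined with the bound on $|U_s|$ this yields $\int_0^T b^2(U_s)\,ds\le C_\delta+C_\delta\,\delta\sup_{s\le T}|W_s|^2$, and, by the computation carried out in Lemma~\ref{lem:powZ} (the reflection principle together with $\bE[e^{c|W_T|^2}]<\infty$ for $c<1/(2T)$), $\bE[\exp(c\int_0^T b^2(U_s)\,ds)]<\infty$ for all sufficiently small $c>0$. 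Choosing $\delta$ small enough in terms of $p$ and $T$ makes the second factor finite; the same integrability, via Corollary~3.5.16 of \cite{KS}, shows that the exponential local martingale inside the first factor is a genuine martingale, so that factor equals $1$. Hence $\bE[|\hat Z_T|^p]\le C$, and the identical argument with $U_s$ replaced by $U_{\eta_h(s)}$ gives $\bE[|\hat Z^h_T|^p]\le C$ with the same constant $C$, which does not depend on $h$.

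There is no serious obstacle here: the proof is a routine transcription of Lemma~\ref{lem:powZ}, the single point worth stressing being that it is the \emph{sub-linear} growth of $b$ (and not merely linear or Lipschitz growth) that secures $\bE[\exp(c\int_0^T b^2(U_s)\,ds)]<\infty$ for the relevant range of $c$ --- hence both the martingale property used for the first factor and the finiteness of the second --- exactly as Remark~\ref{rmk:exponent} shows to be necessary. The reflecting term is controlled once and for all by the Skorohod estimate $L^0_t(U)\le|\sigma|\sup_{s\le t}|W_s|$, which is the only place the structure of the reflected equation enters.
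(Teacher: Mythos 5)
Your proof is correct and follows essentially the same route the paper intends: the paper gives no separate argument for this lemma, simply asserting it holds ``in the same way as Lemma~\ref{lem:powZ}'', and your transcription — Cauchy--Schwarz on the decomposed exponent, the martingale property of the first factor, and sub-linear growth plus the Skorohod bound $0\le L^0_t(U)\le|\sigma|\sup_{s\le t}|W_s|$ (already noted in the proof of Lemma~\ref{change_RSDE}) to control the second factor — is exactly the intended adaptation. No issues.
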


Finally, we introduce the following auxiliary estimate.
\begin{lemma}\label{auxiliary_RSDE}
Let $U$ as in Lemma \ref{change_RSDE}.
Suppose that $\zeta$ is $\alpha$-H\"older continuous with $\alpha \in (0,1]$, then for any   $t>s>0$,
\begin{equation*}
\bE [|\zeta(U_t) - \zeta(U_s)|^p]
\leq C_p (t-s)^{p\alpha/2}.
\end{equation*}
\end{lemma}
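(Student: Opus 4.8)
The plan is to use the $\alpha$-H\"older continuity of $\zeta$ to reduce the claim to a moment estimate on the increment $U_t-U_s$, and then to bound that increment by a Brownian increment using the explicit Skorohod representation of the local time provided by Lemma~\ref{Skorohod}.

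First I would note that, since $|\zeta(x)-\zeta(y)|\le C|x-y|^\alpha$ for all $x,y\in\br$, one has
\[
\bE[|\zeta(U_t)-\zeta(U_s)|^p]\le C^p\,\bE\big[|U_t-U_s|^{p\alpha}\big],
\]
so it suffices to prove $\bE[|U_t-U_s|^{p\alpha}]\le C_p (t-s)^{p\alpha/2}$. Writing $U_t-U_s=\sm(W_t-W_s)+\big(L^0_t(U)-L^0_s(U)\big)$, the task becomes to control the local-time increment.

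Applying Lemma~\ref{Skorohod} with $z=x_0$ and $y_r=\sm W_r$ gives $L^0_t(U)=\max\{0,\ \max_{0\le r\le t}(-x_0-\sm W_r)\}$. Setting $M_t:=\max_{0\le r\le t}(-x_0-\sm W_r)$, we have $M_t=M_s\vee\max_{s\le r\le t}(-x_0-\sm W_r)$, hence $0\le M_t-M_s=\big(\max_{s\le r\le t}(-x_0-\sm W_r)-M_s\big)^+$. Since $M_s\ge -x_0-\sm W_s$, this is at most $\big(\max_{s\le r\le t}\sm(W_s-W_r)\big)^+\le|\sm|\sup_{s\le r\le t}|W_r-W_s|$. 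As $x\mapsto x^+$ is $1$-Lipschitz and $M_t\ge M_s$, it follows that
\[
0\le L^0_t(U)-L^0_s(U)=M_t^+-M_s^+\le M_t-M_s\le|\sm|\sup_{s\le r\le t}|W_r-W_s|,
\]
and therefore $|U_t-U_s|\le 2|\sm|\sup_{s\le r\le t}|W_r-W_s|$.

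Finally, by the reflection principle (equivalently, Doob's maximal inequality combined with the Burkholder--Davis--Gundy inequality), $\bE\big[\sup_{s\le r\le t}|W_r-W_s|^{q}\big]\le C_q(t-s)^{q/2}$ for every $q>0$; taking $q=p\alpha$ and combining with the two displays above gives $\bE[|\zeta(U_t)-\zeta(U_s)|^p]\le C_p(t-s)^{p\alpha/2}$. The only step requiring care is the local-time increment, but the Skorohod formula reduces it to the one-line Lipschitz estimate above; the remaining steps are routine.
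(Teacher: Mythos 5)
Your proposal is correct and follows essentially the same route as the paper: reduce to $\bE[|U_t-U_s|^{p\alpha}]$ via H\"older continuity, bound the local-time increment by $|\sm|\sup_{s\le r\le t}|W_r-W_s|$ using the Skorohod representation from Lemma~\ref{Skorohod}, and conclude with the Burkholder--Davis--Gundy (or reflection-principle) moment bound for the Brownian supremum. The only difference is that you spell out the elementary running-maximum computation that the paper states directly.
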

\begin{proof}
By H\"older continuity of $\zeta$, we have
\begin{equation*}
\bE [|\zeta(U_t) - \zeta(U_s)|^p)]
\leq \bE [|U_t - U_s|^{\alpha p}]
\leq C\bE [|W_t - W_s|^{p\alpha }]+C\bE [|L_t^0(U) - L_s^0(U)|^{p\alpha }].
\end{equation*}
Hence it is sufficient to prove that 
\begin{equation*}
\bE [|L_t^0(U) - L_s^0(U)|^{p\alpha}]  \leq C_p (t-s)^{p\alpha/2}.
\end{equation*}
Using Lemma \ref{Skorohod}, we have
\begin{align*}
L_s^0(U) \leq L_t^0(U)
&\leq L_s^0(U) + \sup_{s \leq u \leq t} \max\left(0, - \sigma (W_u-W_s) \right).
\end{align*}
Therefore, $|L_t^0(U) - L_s^0(U)| 
\leq  |\sigma| \sup_{s \leq u \leq t} |W_u-W_s|$.
Hence applying Burkholder-Davis-Gundy's inequality, we have 
\begin{align*}
\bE [|L_t^0(U) - L_s^0(U)|^{p\alpha}]  
&\leq C \bE[\sup_{s \leq u \leq t} |W_u-W_s|^{p\alpha}]
\leq C (t-s)^{p\alpha/2}.
\end{align*}
This concludes the proof.

\end{proof}

\noindent \emph{Proof of Theorem \ref{main3}}
We first recall that $\hat{Z}_t,\hat{Z}_t^h,\hat{Y}_t$ and $\hat{Y}_t^h$ are defined in Lemma \ref{change_RSDE}.
Using Lemmas \ref{change_RSDE}  and the elementary estimate $|e^x-e^y|\leq (e^x+e^y)|x-y|$, we have
$$|\bE[f(X)]-\bE[f(X^h)]| \leq \bE\Big[ \big|f(U) (\hat{Z}_T + \hat{Z}^h_T)(\hat{Y}_T-\hat{Y}_T^h)\big|\Big].$$
Thanks to Lemma \ref{Lp-bound} and the H\"older's inequality, for some $r>2$, we have
\begin{align*}
|\bE[f(X)]-\bE[f(X^h)]|&\leq C \bE[|f(U)|^r]^{2/r} ||\hat{Y}_T-\hat{Y}_T^h||_2.
\end{align*}
By a similar argument as the proof of Lemma \ref{dai5}, we can show that 
\begin{align*}
||\hat{Y}_T-\hat{Y}_T^h||_2 \leq C h^{\alpha/2},
\end{align*}
which concludes the proof.
\qed

\begin{remark}
The conclusion of Theorem \ref{main3} still holds if we relax the condition $f$ bounded to 
$\bE[|f(U)|^r]< \infty$ for some $r >2$.
\end{remark}


\section*{Acknowledgements}
This work was completed while  H.N. was staying at Vietnam Institute for Advanced Study in Mathematics (VIASM).
He would like to thank the institute for support.
This work was also partly supported  by the Vietnamese National Foundation for Science and Technology Development (NAFOSTED) under Grant Number 101.03-2014.14.
D.T. was supported by JSPS KAKENHI Grant Number 16J00894.
Both authors thank Arturo Kohatsu-Higa and an anonymous  referee for their helpful comments and  suggestions which have improved the readability of this paper.



\end{document}